\newtheorem{theorem}{Theorem}[section]
\newtheorem{lemma}[theorem]{Lemma}
\newtheorem{proposition}[theorem]{Proposition}
\newtheorem{question}[theorem]{Question}
\newtheorem{problem}[theorem]{Problem}
\newtheorem{claim}{Claim}[theorem]
\theoremstyle{definition}
\newtheorem{definition}[theorem]{Definition}
\theoremstyle{remark}
\DeclareMathOperator{\Aut}{Aut}
\DeclareMathOperator{\Subg}{Subg}
\title{The isomorphism problem for finitely generated bi-orderable groups}
\author[F.~Calderoni]{Filippo Calderoni}
\address{Department of Mathematics, Rutgers University, 
Hill Center for the Mathematical Sciences,
110 Frelinghuysen Rd.,
Piscataway, NJ 08854-8019}
\email{filippo.calderoni@rutgers.edu}
\author[A.~Clay]{Adam Clay}
\address{Department of Mathematics, 420 Machray Hall, University of
Manitoba, Winnipeg, MB, R3T 2N2, Canada}
\email{Adam.Clay@umanitoba.ca}
\thanks{The first author is grateful to Marcin Sabok for telling him about the classification problem for finitely generated left-orderable groups and for interesting discussions. We also thank Thomas Koberda for helpful remarks and references to the literature. We thank the anonymous referee and Rapha\"el Carroy for pointing out an inaccuracy in Section~2. Filippo Calderoni was partially supported by the NSF grant DMS -- 2348819.  Adam Clay was partially supported by NSERC grant RGPIN-2020-05343.}
\begin{document}

\begin{abstract}
We analyze the classification problem for finitely generated orderable groups from the viewpoint of descriptive set theory. We analyze the standard Borel space of finitely generated left-orderable groups, and the subspace of finitely generated bi-orderable groups using spaces of relative cones.  We use this setup to show that the isomorphism relation on  finitely generated bi-orderable groups is weakly universal.
\end{abstract}

\maketitle

\section{Introduction}

A \emph{left-ordering} of a group $G$ is a strict total ordering of $G$ that is invariant under left-multiplication, that is,
\(
g<h \implies  fg<fh
\)
for all \(f,g,h\in G\).  A \emph{bi-ordering} of a group is a left-ordering which is also invariant under right-multiplication.  Groups that admit a left-ordering are called \emph{left-orderable}, those which also admit a bi-ordering are \emph{bi-orderable}.  These properties, and properties of the orderings themselves, can also be understood dynamically, as there is a deep connection in infinite group theory between orderability properties of groups and dynamical counterparts.  For example, a countable group is left-orderable precisely when it admits a faithful action by homeomorphisms on the real line. The article~\cite{Nav10} and the monographs~\cite{ClaRol,DerNavRiv} are excellent references for the subject of orderable groups.

In this paper we investigate the classification problem for finitely generated left-orderable and bi-orderable groups.  The collection of left- and bi-orderable groups is very rich, for instance there are continuum many  pair-wise non-isomorphic finitely generated left-orderable groups, and there is a wide variety of techniques available to construct such families of left-orderable groups having prescribed properties \cite{KimKobLod, HydLod, BoyGorHu}. However, this abundance and variety do not obstruct the development of a satisfactory classification of left- or bi-orderable groups, such as the theory developed by Baer for rank one abelian groups, or Ulm's classification of reduced abelian \(p\)-groups employing Ulm invariants.

This paper provides such an obstruction, by proving an anti-classification result for finitely generated bi-orderable groups that excludes the possibility of classifying them up to isomorphism. In fact, we prove
that the isomorphism relation  \(\cong_\mathcal{BO}\) on the space of finitely generated bi-orderable groups is weakly universal, and so is conjecturally as complicated as possible. Consequently, classifying finitely generated left-orderable (or even bi-orderable) groups is conjecturally as unfeasible as classifying all finitely generated groups.
 
Our techniques are founded in descriptive set theory. The basis of our approach, already pointed out in~\cite{GolKunLod}, is  a method of considering the class of all finitely generated left-orderable groups as a standard Borel space. The idea builds on Grigorchuk's construction of the space of marked groups. Then one can use descriptive set theory to compare the relative complexity of the isomorphism equivalence relation on the space of finitely generated left-orderable groups with other equivalence relations. This approach has proven useful in a variety of contexts spanning from ergodic theory~\cite{ForRudWei,ForWei, ForWei22}, to dynamics~\cite{DekKwiPenSab}, functional analysis~\cite{CamGao,FerLouRos, Sab16}, and geometric group theory~\cite{OsiOya, Tho08}.  

More recently, there is an ongoing investigation of applications of descriptive set theory to left-orderable groups~\cite{CalCla22,CalCla24a,CalCla24b, CalSha,GolKunLod,Mol,Pou25, HoKhaRos}, with this manuscript being our latest contribution. Our approach, however, introduces a new application of descriptive set theory, and opens up the possibility of investigating the classification of special sub-classes of orderable groups. 

Our technique of proof is to encode the conjugacy action of a non-abelian free group on its subgroups within the isomorphism relation on the space of bi-orderable groups, see Theorem \ref{thm:main}.  We do this by constructing a particular family of finitely generated bi-orderable groups, all of which arise as the quotient of a single finitely generated bi-orderable group that encodes the conjugacy relation within its automorphism group.  See Section \ref{sec:cocycle}.

\subsection{Organisation of the paper}
In Section~\ref{sec:spaces} we review the basic background required from descriptive set theory and define the standard Borel space of left-orderable (and bi-orderable) finitely generated groups.  We also prove our main result, assuming the existence of a special kind of bi-orderable group. (See Theorem~\ref{thm:main}.) In Section~\ref{sec:cocycle} we discuss the machinery of inhomogeneous 2-cocycles and construct the required group. We conclude our paper with a discussion of open questions in Section~\ref{sec:questions}.

\section{The spaces, and a weak reduction}
\label{sec:spaces}

In this section
we introduce the space of finitely generated left-orderable groups and the space of finitely generated bi-orderable groups, denoted $\mathcal{LO}$ and $\mathcal{BO}$ respectively, as well as the tools from descriptive set theory needed for our arguments. 

Our discussion of $\mathcal{LO}$ and $\mathcal{BO}$ is based on Grigorchuk's space of finitely generated groups~\cite{Gri84,Gri05}. (See also Champetier~\cite{Cha00} or Thomas~\cite{Tho08} for a self-contained presentation of Grigorchuk's space.)

\subsection{Countable Borel equivalence relations}
Recall that a topological space is \emph{Polish}  if it is separable and admits a complete compatible metric. By a classical theorem of Alexandrov, a subspace of the topological Polish space \((X,\tau)\) is Polish if and only if it is \(G_\delta\) in \(\tau\). (E.g., see~\cite[Theorem~3.11]{Kec}.) In particular, a closed subset of a Polish space is Polish with the subspace topology.

Now suppose that \(X\) is a set and that \(\mathcal{B}\) is a \(\sigma\)-algebra of subsets
of \(X\). Then \(( X, \mathcal{B})\) is a \emph{standard Borel space} if there exists a Polish topology \(\tau\) on
\(X\) such that \(\mathcal{B}\) is the \(\sigma\)-algebra generated by \(\tau\). 
In particular, every Polish space is standard Borel with the Borel structure generated by its topology. Moreover, any Borel subset \(A\) of a standard Borel space \((X,\mathcal{B})\) inherits a standard Borel structure \((A,\mathcal{B}_ A)\) where \(\mathcal{B}_A=\{B\subseteq A : B\in \mathcal{B}\}\).

An important example in this paper is the Polish space of all subgroups of a fixed countable group with the Chabauty topology. More precisely, let \(G\) be a countable group and set \(\Subg(G) = \{H \in 2^G : H\text{ is a subgroup of }G\}\).    Recall that the topology on $2^G$  has a subbasis consisting of the sets \[
    U_g=\{S\subseteq G : g\in S\},\qquad U^C_g=\{S\subseteq G : g\notin S\}, 
    \] 
 where $g \in G$.  Then, for example, the subsets of $G$ that are not closed under the group operation comprise an open set in $2^G$, which is given by
 \[ \bigcup_{g, h \in G} U_g \cap U_h \cap U_{gh}^C.
 \]
By similar reasoning, one checks that $2^G \setminus \Subg(G)$ is open, so that \(\Subg(G)\) is closed and therefore a compact Polish space with the subspace topology.  

An equivalence relation \(E\) on a standard Borel space \(X\) is said to be \emph{Borel} if \(E \subseteq X\times X\) is a Borel subset of \(X\times X\). A Borel equivalence relation \(E\) is said to be \emph{countable} if every \(E\)-equivalence class is countable. 

A typical example of such an equivalence relation arises as follows. Let \(G\) be a countable group. Then a \emph{standard Borel \(G\)-space} is a standard Borel space \(X\) equipped with a Borel action \(G\times X\to X \) of \(G\) on \(X\), which we denote by \( (g, x) \mapsto g \cdot x\).
For any \(x\in X\), we let \(G\cdot x =\{g\cdot x : g\in G\}\) denote the orbit of \(x\), and \(E^X_G\) the \emph{orbit equivalence relation} on \(X\) whose classes are the \(G\)-orbits.  That is,
\[
x\mathbin{E}^X_Gy\quad \iff\quad G\cdot x = G\cdot y.
\]
Whenever \(G\) acts on \(X = \Subg(G)\) by conjugation we will instead write \(E_c(G)\) in place of \( E^X_G\).

Let \(E, F\) be countable Borel equivalence relations on standard Borel spaces $X$ and $Y$ respectively. A Borel map \(f \colon X\to Y\) is said to be a \emph{homomorphism} from \(E\) to \(F\)  if 
\[x\mathbin{E} y \implies f(x)\mathbin{F} f(y)\]
for all \(x,y\in X\).  We say that \(E\) is \emph{weakly Borel reducible}
to \(F\), written \(E \leq^w_{B} F\) if and only if there exists a countable-to-one Borel homomorphism
\(f \colon X \to Y\) from \(E\) to \(F\). In this case, we say that \(f\) is a \emph{weak Borel reduction} from
\(E\) to \(F\).  If \(f\) satisfies the stronger property that for all \(x, y \in X\),
\[x\mathbin{E}y \iff f(x)\mathbin{F}f(y),\]
then \(f\) is said to be a \emph{Borel reduction} from \(E\) to \(F\), and we write \(E \leq_B F\). 

\begin{definition}
A countable Borel equivalence relation \(E\) is said to be
\emph{universal} if and only if \(F \leq_B
 E\) for every countable Borel equivalence relation \(F\). Similarly, \(E\) is said to be \emph{weakly
universal} if and only if \(F \leq^w_B E\) for every countable Borel equivalence relation \(F\).
\end{definition}

The following result of Thomas and Velickovic~\cite[Theorem~7]{ThoVel}  will be used later in this paper. (See also Gao~\cite[Theorem~2]{Gao00} for an alternative shorter proof.) Here, and throughout this manuscript, we use $\mathbb{F}_n$ to denote the free group on $n$ generators. We recall that the free group \(\mathbb{F}_n\) is bi-orderable for all \(n\in \mathbb{N}_+\cup \{\infty\}\). (E.g., see~\cite[Theorem~3.4]{ClaRol}.) We will be using this fact a lot without reference.

\begin{theorem}[Velickovic--Thomas~1999]
    For all \(n> 1\), the countable Borel equivalence relation \(E_c(\mathbb{F}_n)\) is universal.
\end{theorem}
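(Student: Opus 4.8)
The plan is to verify the two components of universality separately. \emph{First}, I would note that $E_c(\mathbb{F}_n)$ is itself a countable Borel equivalence relation: $\Subg(\mathbb{F}_n)$ is compact Polish (hence standard Borel), the conjugation map $(g,H)\mapsto gHg^{-1}$ is Borel, and every orbit is countable because $\mathbb{F}_n$ is. So the notion of a \emph{universal} countable Borel equivalence relation applies to it, and by the general theory of such relations it suffices to produce a single Borel reduction of a fixed universal relation \emph{into} $E_c(\mathbb{F}_n)$. For that fixed relation I would take the free part of the Bernoulli shift $\mathbb{F}_2\curvearrowright (2^{\mathbb N})^{\mathbb{F}_2}$, which is universal by the theorem of Dougherty--Jackson--Kechris. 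Because $n>1$, I may fix a surjection $\pi\colon\mathbb{F}_n\twoheadrightarrow\mathbb{F}_2$; its kernel $N=\ker\pi$ is, being a subgroup of a free group, itself free (of infinite rank), and $N$ is normal in $\mathbb{F}_n$ with $\mathbb{F}_n/N\cong\mathbb{F}_2$.

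\emph{Second}, and this is the heart of the matter, I would construct a Borel map $\Phi$ from the shift space into $\Subg(\mathbb{F}_n)$ carrying shift-orbits to conjugation-orbits. The device is a Reidemeister--Schreier basis of $N$: choosing a Schreier transversal $\{t_\gamma:\gamma\in\mathbb{F}_2\}$ for $N$ in $\mathbb{F}_n$ produces a free basis of $N$ whose elements are organized into blocks indexed by the cosets $\gamma\in\mathbb{F}_2$, and conjugation by a representative $t_\gamma$ permutes these blocks according to left translation by $\gamma$ on $\mathbb{F}_2$ — that is, it realizes the shift. I would then let $\Phi(y)$ be the subgroup of $N\le\mathbb{F}_n$ generated by a distinguished basis element in each block $\gamma$ for which $y$ prescribes it (coding the value of $y$ at $\gamma$ into that block). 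Since the selected family transforms covariantly under the transversal, one obtains
\[
\Phi(\gamma\cdot y)=t_\gamma\,\Phi(y)\,t_\gamma^{-1},
\]
so that shift-equivalent points have conjugate images; this makes $\Phi$ a Borel homomorphism from the shift to $E_c(\mathbb{F}_n)$.

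The step I expect to be the main obstacle is the reverse implication needed to upgrade this homomorphism to a genuine Borel reduction, i.e.\ showing that $\Phi(y)$ and $\Phi(y')$ conjugate in $\mathbb{F}_n$ forces $y,y'$ to be shift-equivalent. Here one must contend with two sources of spurious conjugacies. Because $N$ is normal, conjugacy in $\mathbb{F}_n$ of subgroups of $N$ factors as the $\mathbb{F}_2$-shift part \emph{together with} conjugacy taking place inside $N$ itself; so I must guarantee that two selection-subgroups generated by subsets of a fixed free basis are conjugate within $N$ only when the selected data coincide, which is a Nielsen/free-factor rigidity statement. In addition, an arbitrary conjugator could in principle scramble the basis, so I would have to insert distinctive ``marker'' elements (verifying their independence by a ping-pong argument) that pin down the block structure and force any conjugacy to respect it; working on the free part, where stabilizers are trivial, is what makes this rigidity attainable. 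I would also want the chosen sub-basis to be recoverable from the subgroup it generates, so that the coding can be inverted Borel-measurably.

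Finally, I would record why the hypothesis $n>1$ is essential: for $n=1$ the group $\mathbb{F}_1=\mathbb{Z}$ is abelian, conjugation is trivial, and $E_c(\mathbb{Z})$ is just equality on $\Subg(\mathbb{Z})$, a smooth relation that is very far from universal. The non-abelian freeness of $\mathbb{F}_n$ is exactly what supplies both the universal shift to be coded and the combinatorial rigidity required to decode it.
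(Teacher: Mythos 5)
First, a note on context: the paper does not prove this statement at all --- it is quoted from Thomas--Velickovic~\cite{ThoVel} (with an alternative proof by Gao~\cite{Gao00}) --- so there is no in-paper argument to compare against. Your overall strategy, namely reducing the universal shift action of $\mathbb{F}_2$ on a subset/Bernoulli space to subgroup conjugacy by coding a subset of $\mathbb{F}_2$ into the free factor spanned by selected elements of an $\mathbb{F}_2$-indexed free basis, is indeed the spirit of Gao's proof. But as written the proposal has genuine gaps.

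The first is fatal for the base case $n=2$: finitely generated free groups are Hopfian, so every surjection $\mathbb{F}_2\twoheadrightarrow\mathbb{F}_2$ is an isomorphism and your $N=\ker\pi$ is trivial rather than free of infinite rank. The device only gets off the ground for $n\ge 3$ (say $N$ the normal closure of $x_3,\dots,x_n$), and $n=2$ must then be handled by a separate reduction $E_c(\mathbb{F}_3)\le_B E_c(\mathbb{F}_2)$ through a carefully chosen embedding, which is itself not automatic. Second, the asserted equivariance $\Phi(\gamma\cdot y)=t_\gamma\Phi(y)t_\gamma^{-1}$ fails for a general Reidemeister--Schreier basis: conjugating the Schreier generator $s_{\gamma,i}=t_\gamma x_i\,\overline{t_\gamma x_i}^{\,-1}$ by $t_\delta$ gives $u\,s_{\delta\gamma,i}\,v^{-1}$ with correction terms $u,v\in N$, because $t_\delta t_\gamma\neq t_{\delta\gamma}$ in general; conjugation permutes the blocks only up to twisting, so the conjugate subgroup need not be generated by the translated selection. (This is precisely why the standard codings use generators of the form $gcg^{-1}$ with $g$ ranging over a canonical copy of $\mathbb{F}_2$, which transform strictly covariantly; your construction becomes correct only with that special choice of $N$ and transversal.) Third --- and you concede this --- the converse rigidity is the entire mathematical content of the theorem: writing a conjugator as $g=t_\gamma m$ with $m\in N$, one must show that free factors of $N$ generated by subsets of a fixed basis are conjugate within $N$ only when the coded data agree, and that the selection is Borel-recoverable from the subgroup. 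You correctly name the ingredients (Nielsen/free-factor rigidity, marker elements, ping-pong) but do not carry out any of them, so what you have is a plausible plan for the known proof rather than a proof.
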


Clearly, every universal countable Borel equivalence relation is also weakly universal. However, the following fundamental problem of Hjorth is still open.

\begin{question}[Hjorth]
    Does there exist a weakly universal countable Borel equivalence relation which is not universal?
\end{question}

\subsection{The space of finitely generated groups, and the isomorphism relation}
\label{subsec:fggroups}

Our presentation of the space of left-orderable groups is different from the one of \cite{GolKunLod, Osi21apal}. While it may be possible to follow their setup and recover our results using their space, our approach is particularly convenient for tackling isomorphism problems.

Let \(\mathbb{F}_\infty\) be the free group whose generating set is \(S= \{x_i: i\in \mathbb{N}\}\).
Let \(\mathcal{N} \subset 2^{\mathbb{F}_\infty}\) be the space of all normal subgroups of \(\mathbb{F}_\infty\) that contain all but finitely many elements of $S$.  Any finitely generated group is therefore isomorphic to \(\mathbb{F}_\infty/N\) for some \(N\in \mathcal{N}\).  Observe that $\mathcal{N}$ is Borel as follows:

Every automorphism of \(\mathbb{F}_\infty\) induces a self-homeomorphism of \(\Subg(\mathbb{F}_\infty)\),  from which it follows that the set of all normal subgroups $\mathcal{N}_0$ of $\mathbb{F}_\infty$ (i.e., the collection of all subgroups fixed by inner automorphisms) is closed and therefore compact.  Moreover, for each finite subset $F \subset S$ the set
$$\mathcal{N}_F = \{N \in \mathcal{N}_0 : x_i \in N \iff x_i \notin F \}$$
is clearly $G_\delta$, since
\[ \mathcal{N}_F = \mathcal{N}_0 \cap  \bigcap_{x_i \notin F} U_{x_i} \cap  \bigcap_{x_i \in F} U_{x_i}^C .
\]
We conclude that
\[ \mathcal{N} =  \bigcup_{F \subset S, \; |F|<\infty} \mathcal{N}_{F},
\]
showing that \(\mathcal{N}\) is $F_\sigma$. It follows, in particular, that \(\mathcal{N}\) is a standard Borel space with the subspace Borel structure. So, there is some topology on \(\mathcal{N}\) that is compatible with its standard Borel structure and turns it into a Polish space. However, we point out below that the obvious subspace topology of \(\mathcal{N}\) is not Polish---contrary to what was claimed in an earlier version of this manuscript, and what is sometimes claimed in the literature.

\begin{theorem}
\label{thm : N not Polish}
    The subset $\mathcal{N}$ is not \(G_\delta\) in \(\mathcal{N}_0\). Thus, $\mathcal{N}$ is not Polish  with the subspace topology.
\end{theorem}

The proof uses the following lemma:

\begin{lemma}
\label{lem:not_g_delta}
    Suppose that $X$ is a Baire space and that $U \subset X$ is a dense meagre set.  Then $U$ is not $G_{\delta}$.
\end{lemma}
\begin{proof}
    Let \(X\) be Baire and \(U\) be dense and meagre in \(X\).
    Suppose also that $U = \bigcap_{i=1}^\infty U_i$ where each $U_i$ is open, towards contradiction. Note that each $U_i$ is also dense in $X$.  Now take the complement, and observe
\[ X \setminus U = X \setminus \left( \bigcap_{i=1}^\infty U_i \right)  = \bigcup_{i=1}^\infty X \setminus U_i.
\]
Each set $X \setminus U_i$ is closed and has empty interior, since $U_i$ is dense.  So each set \(X \setminus U_i\) is nowhere dense, and $X \setminus U$ is meagre. But now both $U$ and $X \setminus U$ are meagre, a contradiction.
\end{proof}

\begin{proof}[Proof of Theorem~\ref{thm : N not Polish}]
First, we show that $\mathcal{N}$ is dense in $\mathcal{N}_0$.  To see this, let $N \in \mathcal{N}_0$ and let \(U= U_{g_1}\cap \dotsb \cap U_{g_r}\cap U^C_{g_{r+1}}\cap \dotsb\cap U^C_{g_n}\) be a basic open set with \(N \in U\).

    Let \(k\in \mathbb{N}\) such that \(g_1,\dotsc, g_n\in \langle x_0,\dotsc, x_k \rangle \) and denote by \(q\colon \mathbb{F}_\infty \to \mathbb{F}_\infty/N\) the canonical quotient map onto \(\mathbb{F}_\infty/N\). Consider two cases:
    
    First suppose that there exists \(\ell >k\) such that \(x_\ell \in N\). Then
    fix a non-identity element $h \in \mathbb{F}_{\infty}/N$ and define $\phi \colon \mathbb{F}_{\infty} \rightarrow \mathbb{F}_{\infty}/N$ by 
    \[
\phi(x_i) =
\begin{cases}
    q(x_i) &\text{if $i < \ell$}\\
    h &\text{if $i = \ell$} \\
    id & \text{otherwise.}
\end{cases}
\]
Set $K = \ker \phi$, and note that $K \neq N$, and $K$ contains all but finitely many generators of $\mathbb{F}_\infty$, and \(K \in U\) by construction.  

On the other hand, suppose that for all $\ell >k$, we have $x_\ell \notin N$.  Then define $\phi \colon \mathbb{F}_{\infty} \rightarrow \mathbb{F}_{\infty}/N$ by 
    \[
\phi(x_i) =
\begin{cases}
    q(x_i) &\text{if $i < \ell$}\\
    id &\text{if $i \geq \ell$}. 
\end{cases}
\]
Again, set $K = \ker(\phi)$ and note $K \neq N$, and $K$ contains all but finitely many generators of $\mathbb{F}_\infty$, and \(K \in U\) by construction.  It follows that $\mathcal{N}$ is dense in $\mathcal{N}_0$.

Now it is left to show that \(\mathcal{N}\) is meagre. For $i \geq 1$ set 
$$
X_i = \{ N \in \mathcal{N} \mid x_j \in N \text{ for all } j>i \}.
$$
Observe that 
\[ X_i = \bigcap_{j=i+1}^\infty U_{x_j},
\]
so each $X_i$ is a closed set, and that $\mathcal{N} = \bigcup_{i=1}^\infty X_i$.  

Moveover, each $X_i$ has empty interior:  If $N \in X_i$ and $U$ is a neighbourhood of $N$, by arguments identical to above we can construct a normal subgroup $K \in U$ and such that $K \notin X_i$.  Specifically,  let \(U= U_{g_1}\cap \dotsb \cap U_{g_r}\cap U^C_{g_{r+1}}\cap \dotsb\cap U^C_{g_n}\) be a basic open set with \(N \in U\).

    Let \(k\in \mathbb{N}\) such that \(g_1,\dotsc, g_n\in \langle x_0,\dotsc, x_k \rangle \) and denote by \(q\colon \mathbb{F}_\infty \to \mathbb{F}_\infty/N\) the canonical quotient map onto \(\mathbb{F}_\infty/N\). Fix a non-identity element $h \in \mathbb{F}_{\infty}/N$ and define $\phi \colon \mathbb{F}_{\infty} \rightarrow \mathbb{F}_{\infty}/N$ by 
    \[
\phi(x_i) =
\begin{cases}
    q(x_i) &\text{if $i \leq k$}\\
    h &\text{if $i>k$.} 
\end{cases}
\]
Set $K = \ker(\phi)$, and note that $K \in U$ by construction, but also $K \notin \mathcal{N}$, so certainly $K \notin X_i$.

The result now follows from Lemma~\ref{lem:not_g_delta}.
\end{proof}

Next we consider the equivalence relation $\cong$ on $\mathcal{N}$ given by group isomorphism.

Let \(\Aut_f(\mathbb{F}_\infty)\) be the subgroup of \(\Aut(F_\infty)\) generated by the
elementary Nielsen transformations
\[\{\alpha_i
: i \in\mathbb{N}\} \cup \{\beta_{ij} : i \neq j \in\mathbb{N} \},\]
where \(\alpha_i\)
is the automorphism sending \(x_i\) to \(x
^{-1}
_i\)
and fixing the other generators, and
\(\beta_{ij}\) is the automorphism taking \(x_i\) to \(x_ix_j\) and leaving the other generators fixed.   

\begin{proposition}\cite[Section~3]{Cha00}
    If \(N, M \in\mathcal{N} \), then
\(
\mathbb{F}_\infty/N \cong
F_\infty/M\) if and only if there is \(\phi\in \Aut_f(\mathbb{F}_\infty)\) such that \( \phi(N) = M\).
\end{proposition}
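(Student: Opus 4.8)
The plan is to prove the two directions separately; the reverse implication is routine and the forward implication rests on a ``stable Nielsen equivalence'' fact about generating tuples.

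For the reverse direction, suppose $\phi \in \Aut_f(\mathbb{F}_\infty)$ satisfies $\phi(N) = M$. Since $\phi$ is a bijection of $\mathbb{F}_\infty$ carrying $N$ onto $M$, the rule $gN \mapsto \phi(g)M$ is a well-defined isomorphism $\mathbb{F}_\infty/N \to \mathbb{F}_\infty/M$, so the quotients are isomorphic.

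For the forward direction, let $\psi \colon \mathbb{F}_\infty/N \to \mathbb{F}_\infty/M$ be an isomorphism, write $\pi_N, \pi_M$ for the quotient maps, and set $G = \mathbb{F}_\infty/N$. Because $N, M \in \mathcal{N}$, I may fix $n$ with $x_i \in N \cap M$ for all $i > n$; then $g_i := \pi_N(x_i)$ and $h_i := \pi_M(x_i)$ ($1 \le i \le n$) generate $\mathbb{F}_\infty/N$ and $\mathbb{F}_\infty/M$ respectively and are trivial for $i > n$. Pulling the second tuple back through $\psi$, I obtain two generating $n$-tuples of the \emph{same} group $G$, namely $(g_1, \dots, g_n)$ and $(a_1, \dots, a_n)$ with $a_i := \psi^{-1}(h_i)$. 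The crux is the claim that these become Nielsen equivalent after padding each with $n$ trivial entries. To see this I would note that each $a_j$ is a word in the $g_i$, so right-multiplying the trivial slots by suitable $g_i^{\pm 1}$ turns $(g_1, \dots, g_n, 1, \dots, 1)$ into $(g_1, \dots, g_n, a_1, \dots, a_n)$ using only Nielsen moves; symmetrically, since the $a_j$ also generate $G$, one turns $(a_1, \dots, a_n, 1, \dots, 1)$ into $(a_1, \dots, a_n, g_1, \dots, g_n)$; and the two resulting length-$2n$ tuples differ by a block transposition, itself a composition of Nielsen moves.

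Finally I would lift this finite sequence of tuple moves to $\Aut_f(\mathbb{F}_\infty)$. Each Nielsen move on tuples is induced by the corresponding elementary transformation $\alpha_i$ or $\beta_{ij}$ acting on the free generators $x_1, \dots, x_{2n}$, and these commute with $\pi_N$; hence the composite $\phi \in \Aut_f(\mathbb{F}_\infty)$ of the lifted transformations satisfies $\pi_N(\phi(x_i)) = a_i = \psi^{-1}(\pi_M(x_i))$ for every $i$ (both sides are trivial once $i > n$). Thus $\pi_N \circ \phi = \psi^{-1} \circ \pi_M$ as homomorphisms $\mathbb{F}_\infty \to G$, and comparing kernels gives $\phi^{-1}(N) = M$; replacing $\phi$ by $\phi^{-1} \in \Aut_f(\mathbb{F}_\infty)$ then yields the desired $\phi(N) = M$. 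I expect the main obstacle to be the padded Nielsen equivalence step: one must check that right-multiplication of a tuple entry by $g_i^{-1}$ (not just $g_i$) is achievable from the generators $\alpha_i, \beta_{ij}$ — it is, via conjugating $\beta_{ij}$ by $\alpha_j$ — and keep the bookkeeping tight enough that the lift lands in the finitely supported group $\Aut_f$ rather than merely in $\Aut(\mathbb{F}_\infty)$.
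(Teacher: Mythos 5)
Your proof is correct. The paper gives no proof of this proposition—it is quoted from Champetier \cite[Section~3]{Cha00}—and your argument (the easy direction by descending $\phi$ to the quotients, the hard direction via stable Nielsen equivalence of the two padded generating tuples of $\mathbb{F}_\infty/N$ followed by lifting the finitely many elementary moves to $\Aut_f(\mathbb{F}_\infty)$ and comparing kernels) is precisely the standard argument from that reference.
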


In particular, since \(\Aut_f(\mathbb{F}_\infty)\) is countable, this means that the equivalence classes of $(\mathcal{N}, \cong)$ are countable.  In fact, we have the following.

\begin{theorem}[Thomas-Velickovic~1999]
   The isomorphism relation  \(\cong\) on the space $\mathcal{N}$ of finitely generated groups is a universal countable Borel equivalence relation.
\end{theorem}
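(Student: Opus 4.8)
The plan is to derive universality from the Velickovic--Thomas theorem by exhibiting a Borel reduction $E_c(\mathbb{F}_2) \leq_B {\cong}$; since $E_c(\mathbb{F}_2)$ is universal and $\leq_B$ is transitive, this gives $F \leq_B {\cong}$ for every countable Borel equivalence relation $F$. Before that I would confirm that $\cong$ is itself a countable Borel equivalence relation on $\mathcal{N}$. Countability is immediate from the Proposition together with the countability of $\Aut_f(\mathbb{F}_\infty)$. For Borelness I would use the equivalence $N \cong M \iff \exists\, \phi \in \Aut_f(\mathbb{F}_\infty),\ \phi(N) = M$ and the fact (recorded above) that each $\phi$ induces a self-homeomorphism of $\Subg(\mathbb{F}_\infty)$, so that $\{(N,M) : \phi(N) = M\}$ is closed; a countable union of closed sets is Borel.

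The heart of the matter is a Borel map $\Phi \colon \Subg(\mathbb{F}_2) \to \mathcal{N}$, $H \mapsto N_H$ with $\mathbb{F}_\infty/N_H \cong G_H$, where $G_H$ is a finitely generated group satisfying $G_H \cong G_{H'}$ if and only if $H$ and $H'$ are conjugate in $\mathbb{F}_2$. The natural candidate encodes $H$ as the centralizer of a marker element: writing $\mathbb{F}_2 = \langle a, b\rangle$, set
\[
G_H = \big\langle\, a,\, b,\, t \;\big|\; [t,h]=1 \text{ for all } h \in H \,\big\rangle,
\]
the amalgamated free product $\mathbb{F}_2 *_H (H \times \langle t\rangle)$. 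Even when $H$ is infinitely generated (so the relator set is infinite), $G_H$ is finitely generated by $a, b, t$, hence is a genuine point of $\mathcal{N}$ once the presentation is transported into $\mathbb{F}_\infty$. The point of the construction is that $H$ should be recoverable from $G_H$ as (essentially) the centralizer of $t$, tying the isomorphism type to the conjugacy class of $H$.

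The verification then has three parts. Conjugacy implies isomorphism is the easy direction: if $H' = gHg^{-1}$ then conjugation by $g$ is an automorphism of $\mathbb{F}_2$ taking $H$ to $H'$, and it extends (fixing $t$) to an isomorphism $G_H \to G_{H'}$. Borelness of $\Phi$ is routine bookkeeping: from the point $H \in 2^{\mathbb{F}_2}$ one reads off the relators in a Borel manner, and passing from a presentation to the normal closure of its relators inside $\mathbb{F}_\infty$ is a Borel operation; combined with the two equivalences this makes $\Phi$ a Borel reduction. The main obstacle is the rigidity direction: I must show that every abstract isomorphism $G_H \to G_{H'}$ is forced to carry the distinguished copy of $\mathbb{F}_2$ to a conjugate of $\mathbb{F}_2$ and $t$ to a conjugate of $t^{\pm 1}$, so that the induced self-map of $\mathbb{F}_2$ exhibits the conjugacy of $H$ and $H'$. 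This is where the naive construction above may need reinforcement -- choosing building blocks rigid enough (via malnormality or small-cancellation features) that the amalgam decomposition of $G_H$, and hence the centralizer of $t$, is an isomorphism invariant. Gaining control over all abstract isomorphisms, and not merely the obvious geometric ones, is the genuinely hard step; everything else is routine.
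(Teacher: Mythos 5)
First, a point of comparison: the paper offers no proof of this statement at all --- it is quoted as a citation to Thomas--Velickovic (1999), so there is no in-paper argument to measure your proposal against. Your overall strategy (exhibit a Borel reduction $E_c(\mathbb{F}_2)\leq_B{\cong}$ and invoke transitivity, after checking that $\cong$ is a countable Borel equivalence relation via the Champetier proposition and the closedness of each graph $\{(N,M):\phi(N)=M\}$) is the right shape and matches how the result is actually proved in the literature. The preliminary verifications are fine.

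However, your candidate construction has a genuine defect, beyond the rigidity step you flag as open: the direction ``$G_H\cong G_{H'}$ implies $H\sim_c H'$'' is \emph{false} for $G_H=\langle a,b,t\mid [t,h]=1,\ h\in H\rangle$. Any automorphism $\alpha$ of $\mathbb{F}_2=\langle a,b\rangle$ extends, by fixing $t$, to an isomorphism $G_H\to G_{\alpha(H)}$, since it carries the relator set $\{[t,h]:h\in H\}$ to $\{[t,h']:h'\in\alpha(H)\}$. Taking $H=\langle a\rangle$ and $\alpha$ the automorphism swapping $a$ and $b$ gives $G_{\langle a\rangle}\cong G_{\langle b\rangle}$ while $\langle a\rangle$ and $\langle b\rangle$ are not conjugate in $\mathbb{F}_2$. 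So at best your map is a homomorphism from the $\Aut(\mathbb{F}_2)$-orbit equivalence relation, not a reduction of $E_c(\mathbb{F}_2)$, and universality (as opposed to weak universality) requires a genuine reduction. On top of this, the step you correctly identify as the heart of the matter --- controlling \emph{all} abstract isomorphisms $G_H\to G_{H'}$, not just those respecting the amalgam decomposition --- is left entirely unproved, and it is precisely where Thomas and Velickovic must invest real work (rigidity of the building blocks, e.g.\ via malnormality/small-cancellation input, so that the distinguished copy of $\mathbb{F}_2$ and the element $t$ are recoverable up to conjugacy from the isomorphism type). As it stands the proposal is a plausible outline with a counterexample to its stated key equivalence and no argument for its hardest claim, so it does not constitute a proof.
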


\subsection{The space of finitely generated left-orderable groups}
Let \(G\) be a left-orderable group. Recall that every left-ordering $<$ of $G$ is uniquely determined by its \emph{positive cone} $P = \{ g \in G : g>id \}$.  Conversely, if we are given a set $P \subset G$ satisfying $P \cdot P \subset P$ and $G \setminus \{id\} = P \sqcup P^{-1}$, then $P$ is the positive cone of the left-ordering $<$ of $G$ defined by 
\[ g<h \iff g^{-1}h \in P.
\]
Here, and later in the manuscript, we use $\sqcup$ to indicate disjoint union.

A subgroup $C \subset G$ is \emph{left-relatively convex} if there exists a left-ordering $<$ of $G$ such that for all $a, b \in C$ and $g \in G$, if $a<g<b$ then $g \in C$.  This property can also be reworded in terms of the existence of a certain kind of subsemigroup of $G$, similar to a positive cone.

\begin{proposition}
\label{prop : rel cone}
    Suppose that $G$ is a left-orderable group, and let $C \subset G$ be a subgroup.  Then $C$ is left-relatively convex if and only if there exists a semigroup $P \subset G$ such that 
    \begin{enumerate}
        \item $P \sqcup P^{-1} = G \setminus C$, and
        \item $CPC \subset P$.
    \end{enumerate}
    Moreover, if $C$ is normal in $G$ and $q\colon G \rightarrow G/C$ denotes the quotient map, then $q(P)$ is the positive cone of a left-ordering of $G/C$.
\end{proposition}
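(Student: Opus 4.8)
The plan is to prove the biconditional by exhibiting in each direction an explicit object and verifying its defining properties, with the final clause following from a short computation using normality. For the forward direction, suppose $C$ is left-relatively convex, witnessed by a left-ordering $<$ with positive cone $Q$, and set $P = \{g \in G : c < g \text{ for all } c \in C\}$, the elements lying above all of $C$. The engine of the verification is that each left coset $gC$ is convex --- left-multiplication by $g$ is order-preserving and sends $C$ to $gC$ --- and that two disjoint convex subsets of a linear order are automatically comparable, one lying entirely above the other. Hence for $g \notin C$ the coset $gC$ lies entirely above or entirely below $C$, and $P$ collects precisely the representatives of the upper cosets. Condition (1) then amounts to identifying the lower cosets with $P^{-1}$, which I would do by showing that $gC$ below $C$ forces $g^{-1}C$ above $C$; the care needed here is that in a merely left-invariant order inversion does \emph{not} reverse the order, so this step must be argued from $g^{-1} > id$ together with block comparability rather than from any symmetry. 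That $P$ is a semigroup and that $CPC \subset P$ are then routine left-translation arguments. This identification is the step I expect to be most delicate.

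For the reverse direction, suppose $P$ satisfies (1) and (2). A subgroup of a left-orderable group is left-orderable, so I fix a positive cone $Q_C \subset C$ for a left-ordering of $C$ and glue the cones by setting $Q = P \sqcup Q_C$, claiming this is the positive cone of a left-ordering of $G$ in which $C$ is convex. The decomposition $Q \sqcup Q^{-1} = G \setminus \{id\}$ is immediate from (1) and $Q_C \sqcup Q_C^{-1} = C \setminus \{id\}$, as $G \setminus C$ and $C \setminus \{id\}$ are disjoint. Closure under multiplication is the main point: writing $a, b \in Q$ and splitting into four cases according to whether each factor lies in $P$ or $Q_C$, the mixed cases are exactly where (2) enters, through its consequences $CP \subset P$ and $PC \subset P$ (take one factor to be $id$). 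Convexity of $C$ I would then establish by contradiction: after left-translating to the situation $id < g < b$ with $b \in C$, if $g \notin C$ then $g \in P$ and likewise $g^{-1}b \in P$, so $b = g(g^{-1}b) \in P$, contradicting $b \in C = G \setminus (P \sqcup P^{-1})$.

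Finally, for the moreover clause, assume $C$ is normal. Normality gives $(p_1 C)(p_2 C) = p_1 p_2 C$, so $q(P)$ inherits the semigroup property from $P$, and clearly $q(P)^{-1} = q(P^{-1})$. Taking inverses in (2) yields $CP^{-1}C \subset P^{-1}$, which together with $CPC \subset P$ shows that each nonidentity coset lies entirely inside $P$ or entirely inside $P^{-1}$; hence $q(P)$ and $q(P)^{-1}$ partition $(G/C) \setminus \{C\}$ with neither meeting the identity coset, and $q(P)$ is a positive cone of $G/C$ as required.
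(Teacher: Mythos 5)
Your proof is correct, and its overall architecture matches the paper's: construct $P$ from a convex ordering in the forward direction, glue $P$ with a cone on $C$ in the reverse direction, and handle the quotient by a short normality computation. The main difference is in how you verify the forward direction. The paper takes $P = Q \setminus C$ (positive elements outside $C$) and checks the semigroup property, (1), and (2) by direct order manipulations, each time deriving a contradiction with convexity from an explicit chain of inequalities. You instead define $P$ as the set of elements lying above all of $C$ --- which, by convexity, is the same set --- and route everything through two structural facts: left cosets of a convex subgroup are convex, and disjoint convex subsets of a linear order are comparable as blocks. This is a more geometric packaging that makes the partition $G \setminus C = P \sqcup P^{-1}$ transparent once you observe (as you correctly do, and as is the one genuinely delicate point) that $g < id \iff g^{-1} > id$ still holds for left-orders even though inversion does not reverse the order globally. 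Your reverse direction is essentially identical to the paper's, including the convexity-by-contradiction step. For the final clause, the paper shows $q(P) \cap q(P)^{-1} = \emptyset$ by a direct two-element argument ($h' \in P$, $h \in P^{-1}$, $q(h') = q(h)$ forces $h^{-1}h' \in P \cap C$), whereas you use normality to show each nonidentity coset sits entirely inside $P$ or entirely inside $P^{-1}$; both are fine, and yours gives the slightly stronger coset-wise picture for free.
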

\begin{proof}
    Suppose that the subgroup $C$ is convex relative to a left-ordering $<$ of $G$ with positive cone $Q$.  Set $P = Q \setminus C$.  

    We check that $P$ is a semigroup.  Given $a, b \in P$ suppose that $ab \notin P$, then $ab \in Q \cap C$.  Now $id < a <ab$ would imply $a \in C$, so we must instead have $id <ab <a$.  But then $b <id$ upon left-multiplying by $a^{-1}$, a contradiction.  We conclude $P$ is a semigroup.

Now to show (1), note 
    \[ P \sqcup P^{-1} = (Q\setminus C) \sqcup (Q\setminus C)^{-1} = (Q \sqcup Q^{-1})\setminus C = G \setminus C,
    \]
where in the last line we use the fact that every positive cone $Q$ of a left-ordering of $G$ satisfies $G\setminus \{id\} = Q \sqcup Q^{-1}$.
   
    To show (2) holds, suppose that $a, b \in C$ and $g \in P$, and that $agb \notin P$.  Note $agb \notin C$ since $g \notin C$, and so our assumption implies that $agb \in P^{-1}$.  But then $a<agb$ is not possible, so instead we must have $agb <a$, implying $gb<id$ and so $b<g^{-1}$.  Now $b<g^{-1}<id$, which implies $g^{-1} \in C$ by convexity, a contradiction.  So $C$ being relatively convex implies the existence of such a $P$.

    Now, suppose we have such a $P$.  Choose a positive cone $Q \subset C$ of a left-ordering of $C$, and set $R = P \cup Q$.  We show that $R$ is a positive cone and $C$ is convex relative to the ordering determined by $R$.

    First, let $a, b \in R$.  If $a, b \in Q$ or $a, b \in P$ then $ab \in R$.  So suppose $a \in Q$ and $b \in P$.  Then $ab \in R$ by (2), and if $a \in P$ and $b \in Q$ then similarly $ab \in R$ by (2).  So $R$ is a semigroup.  Now observe
    \[ R \sqcup R^{-1} = (P \sqcup P^{-1}) \cup (Q \sqcup Q^{-1}) = (G \setminus C) \cup (C \setminus \{id\}) = G \setminus \{id\}.
    \]
    So $R$ is indeed a positive cone.  Denote the associated left-ordering by $<$ and suppose that $a,b \in C$ and $g \in G$, and $a<g<b$.  If $g \notin C$, we conclude $a^{-1}g \in P$ and $g^{-1}b \in P$.  But $P$ is a semigroup, so $(a^{-1}g)(g^{-1}b) = a^{-1}b \in P$, a contradiction since $a^{-1}b \in C$ and $P \cap C = \emptyset$.

Last, suppose $C$ is normal, and consider $q(P) \subset G/C$.  Note that $q(P)$ is a semigroup, and $G/C = q(P) \cup q(P)^{-1}$ by property (1).  We check that $q(P) \cap q(P)^{-1} = \emptyset$ as follows.  If $q(P) \cap q(P)^{-1} \neq \emptyset$ then $q(h') = q(h)$ for some $h' \in P$ and $h \in P^{-1}$.  But then $h^{-1}, h'$ are both elements of $P$, and so $h^{-1}h' \in P$.  Yet $q(h') = q(h)$ implies $h^{-1}h' \in C$, a contradiction.  Thus $G/C = q(P) \sqcup q(P)^{-1}$ and $q(P)$ is the positive cone of a left-ordering.
\end{proof}

Call a subset $P$ as in the previous proposition a \emph{relative cone}.  Let $\mathrm{LO}_{rel}(G) \subset 2^G$ denote the \emph{space of relative cones of \(G\)}, equipped with the subspace topology. 

\begin{proposition}
\label{prop:compactrel}
    For every group $G$, the space $\mathrm{LO}_{rel}(G)$ is compact.
\end{proposition}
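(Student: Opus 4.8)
The plan is to show that $\mathrm{LO}_{rel}(G)$ is a closed subset of $2^G$. Since $2^G \cong \{0,1\}^G$ is compact in the product topology, every closed subspace is compact, so this suffices. Concretely, I would exhibit the complement $2^G \setminus \mathrm{LO}_{rel}(G)$ as a union of subbasic open sets $U_g, U_g^C$, i.e.\ verify that each defining clause of ``relative cone'' cuts out a closed set.

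First I would extract from Proposition~\ref{prop : rel cone} a purely combinatorial description of membership: a set $P \subseteq G$ is a relative cone precisely when $P$ is a semigroup, $P \cap P^{-1} = \emptyset$, the set $C := G \setminus (P \cup P^{-1})$ is a subgroup of $G$, and $CPC \subseteq P$. Note that $C$ is \emph{forced} to equal $G \setminus (P \sqcup P^{-1})$, so no choice is involved once $P$ is given. The observation that makes each clause tractable is that membership in the auxiliary set $C$ is itself a clopen condition on $P$: one has $g \in C$ if and only if $g \notin P$ and $g^{-1} \notin P$, so $\{P : g \in C\} = U_g^C \cap U_{g^{-1}}^C$. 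Thus, although $C$ varies with $P$, the statement ``$g \in C$'' can be read off pointwise from the clopen data defining the topology on $2^G$.

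With this in hand, each of the four defining clauses is a closed condition, since each is the failure-set of a ``for all'' quantifier over group elements whose individual instances are clopen. For example, the set of $P$ violating the semigroup clause is the open set $\bigcup_{g,h \in G}(U_g \cap U_h \cap U_{gh}^C)$, and the set violating disjointness is $\bigcup_{g \in G}(U_g \cap U_{g^{-1}})$. The clauses ``$CPC \subseteq P$'' and ``$C$ is a subgroup'' are handled the same way, now also using $\{P : g \in C\} = U_g^C \cap U_{g^{-1}}^C$: a violation of $CPC \subseteq P$ at a triple $(c_1, p, c_2)$, setting $q = c_1 p c_2$, lies in the clopen set $U_{c_1}^C \cap U_{c_1^{-1}}^C \cap U_p \cap U_{c_2}^C \cap U_{c_2^{-1}}^C \cap U_q^C$, and one unions over all such triples; closure of $C$ under multiplication is analogous, while closure under inversion is automatic since $C = C^{-1}$ by construction, and $id \in C$ because $P \cap P^{-1} = \emptyset$. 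Intersecting the four resulting closed sets shows $\mathrm{LO}_{rel}(G)$ is closed, hence compact.

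The step I expect to demand the most care is precisely the two clauses involving $C$, namely that $C$ is a subgroup and that $CPC \subseteq P$, because $C$ is not a fixed subset of $G$ but depends on $P$. Once the clopen description $\{P : g \in C\} = U_g^C \cap U_{g^{-1}}^C$ is isolated, however, these clauses reduce to the same bookkeeping as the semigroup clause and present no real difficulty. A minor point worth flagging is that the characterization remains valid when $G$ is not left-orderable, in which case $\mathrm{LO}_{rel}(G)$ may be empty; but the empty set is closed, so the conclusion holds regardless.
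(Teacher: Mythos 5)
Your proposal is correct and follows essentially the same route as the paper: both express the complement of $\mathrm{LO}_{rel}(G)$ in $2^G$ as a union of open sets, one for each way a subset can fail to be a relative cone, using the clopen description of ``$g \in C$'' as $U_g^C \cap U_{g^{-1}}^C$ to handle the clauses involving the $P$-dependent subgroup $C$. The minor bookkeeping differences (e.g., deriving $id \in C$ from disjointness rather than treating it as a separate clopen condition) do not change the argument.
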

\begin{proof}
We show that the complement of $\mathrm{LO}_{rel}(G)$ is open.  To this end, suppose that $S \in 2^G \setminus \mathrm{LO}_{rel}(G)$, meaning either:

\begin{enumerate}
    \item $S$ is not a semigroup, or
    \item $C = G \setminus (S \sqcup S^{-1})$ is not a subgroup, or
    \item $C = G \setminus (S \sqcup S^{-1})$ does not satisfy $CSC \subset S$, or
        \item $S \cap S^{-1} \neq \emptyset$.
\end{enumerate}
Suppose (1) holds, meaning $S$ is not a semigroup, then there exist $a, b \in S$ such that $ab \notin S$, meaning that 
\[ S \in U_a \cap U_b \cap U_{ab}^C.
\]
The set of all subsets of $G$ which are not semigroups is therefore expressible as
\[ \bigcup_{a, b \in G} U_a \cap U_b \cap U_{ab}^C,
\]
which is an open set. 

If (2) holds then $C = G \setminus (S \sqcup S^{-1})$ is not a subgroup, and so either: (i) there exists $a \in C$ such that $a^{-1} \notin C$, (ii) there exist $a, b \in C$ such that $ab \notin C$, (iii) $id \notin C$.

First note that (i) is not possible, because if $a \notin S \sqcup S^{-1}$ then $a^{-1} \notin S \sqcup S^{-1}$.   If $S$ satisfies (ii) then there exist $a, b \notin S \sqcup S^{-1}$ such that $ab \in S \sqcup S^{-1}$, meaning that $S$ lies in the set 
\[ U_a^C \cap U_b^C \cap U_{a^{-1}}^C \cap U_{b^{-1}}^C \cap (U_{ab} \cup U_{(ab)^{-1}}).
\]
Therefore the set of all $S$ satisfying (ii) is the union
\[ \bigcup_{a, b \in G} U_a^C \cap U_b^C \cap U_{a^{-1}}^C \cap U_{b^{-1}}^C \cap U_{ab} \cup U_{(ab)^{-1}},
\]
which is an open set.  Finally, $S$ satisfies (iii) if and only if $S \in U_{id}^C$, which is also an open set.

Next we consider subsets $S$ of $G$ satisfying (3).  In this case, there exist $a, b \notin S \sqcup S^{-1}$ and $c \in S$ such that $acb \notin S$.  This means that $S$ lies in
\[ U_a^C \cap U_b^C \cap U_{a^{-1}}^C \cap U_{b^{-1}}^C \cap U_c \cap U_{acb}^C, 
\]
and the set of all $S$ satisfying (3) is precisely
\[ \bigcup_{a, b, c \in G} U_a^C \cap U_b^C \cap U_{a^{-1}}^C \cap U_{b^{-1}}^C \cap U_c \cap U_{acb}^C,
\]
which is an open set.

Last, subsets of \(G\) which satisfy (4) are those that lie in the open set
\[ \bigcup_{a \in G} U_a \cap U_{a^{-1}}.
\]

This shows that the complement of $\mathrm{LO}_{rel}(G)$ is open, so the space is compact.
\end{proof}

As a remark, note that $\emptyset \in \mathrm{LO}_{rel}(G)$.  If we add to Proposition~\ref{prop : rel cone} the requirement that $P$ be nonempty, then Antolin and Rivas~\cite{AntRiv} show that $\mathrm{LO}_{rel}(G)$ is no longer compact.  

Given a group $G$, we denote by $\mathrm{Conv}(G) \subset \mathrm{Subg}(G)$ the subspace of all left-relatively convex subgroups of $G$. 

\begin{theorem}
\label{thm:compactconvex}
Let $G$ be a left-orderable group.  The surjective map 
\[ \Phi \colon \mathrm{LO}_{rel}(G) \rightarrow \mathrm{Conv}(G)
\]
given by $\Phi(P) = G\setminus(P \sqcup P^{-1})$ is continuous, in particular, $\mathrm{Conv}(G)$ is compact.
\end{theorem}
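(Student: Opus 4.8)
The plan is to obtain continuity of $\Phi$ directly from the definition of the subspace topology inherited from $2^G$, and then get compactness of $\mathrm{Conv}(G)$ for free, as the continuous image of the compact space $\mathrm{LO}_{rel}(G)$ (Proposition~\ref{prop:compactrel}). Before turning to continuity I would first record that $\Phi$ is well-defined and surjective. Well-definedness is precisely the converse direction of Proposition~\ref{prop : rel cone}: for any relative cone $P$ the set $G \setminus (P \sqcup P^{-1})$ is indeed a left-relatively convex subgroup, so $\Phi$ lands in $\mathrm{Conv}(G)$. Surjectivity is the forward direction of the same proposition, which produces, for every $C \in \mathrm{Conv}(G)$, a relative cone $P$ with $P \sqcup P^{-1} = G \setminus C$, whence $\Phi(P) = C$.

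For continuity, recall that the topology on $\mathrm{Conv}(G) \subset 2^G$ has as a subbasis the traces of the sets $U_g$ and $U_g^C$ for $g \in G$, so it suffices to check that $\Phi^{-1}(U_g)$ and $\Phi^{-1}(U_g^C)$ are relatively open in $\mathrm{LO}_{rel}(G)$. Unwinding the definition of $\Phi$ and using that $g \in P^{-1}$ means $g^{-1} \in P$, I would compute
\[ \Phi^{-1}(U_g) = \{ P \in \mathrm{LO}_{rel}(G) : g \notin P \text{ and } g^{-1} \notin P \} = \mathrm{LO}_{rel}(G) \cap U_g^C \cap U_{g^{-1}}^C, \]
and
\[ \Phi^{-1}(U_g^C) = \{ P \in \mathrm{LO}_{rel}(G) : g \in P \text{ or } g^{-1} \in P \} = \mathrm{LO}_{rel}(G) \cap (U_g \cup U_{g^{-1}}). \]
Both right-hand sides are manifestly relatively open, which gives continuity. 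Finally, since $\Phi$ is continuous and $\mathrm{LO}_{rel}(G)$ is compact, the image $\Phi(\mathrm{LO}_{rel}(G))$ is a compact subset of $2^G$; by surjectivity this image is exactly $\mathrm{Conv}(G)$, which is therefore compact.

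There is no serious obstacle here once Proposition~\ref{prop : rel cone} is in hand. The only point demanding a little care is the translation between membership in $P \sqcup P^{-1}$ and the subbasic conditions: one must remember that each subbasic preimage splits into a condition on $g$ together with the symmetric condition on $g^{-1}$, since $P \sqcup P^{-1}$ is symmetric under inversion. The well-definedness step also quietly uses $P \cap P^{-1} = \emptyset$, which is built into the definition of a relative cone, so that $G \setminus (P \sqcup P^{-1})$ is genuinely the complement of a disjoint union.
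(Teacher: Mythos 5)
Your proof is correct and follows essentially the same route as the paper: compute the preimages of the subbasic sets $U_g$ and $U_g^C$, and deduce compactness of $\mathrm{Conv}(G)$ from surjectivity (Proposition~\ref{prop : rel cone}) together with compactness of $\mathrm{LO}_{rel}(G)$. In fact your formula $\Phi^{-1}(U_g^C) = \mathrm{LO}_{rel}(G) \cap (U_g \cup U_{g^{-1}})$ is the correct one; the paper's displayed expression $(U_a^C \cup U_{a^{-1}}^C)\cap \mathrm{LO}_{rel}(G)$ at that step appears to be a typo, since ``$a\in P$ or $a\in P^{-1}$'' translates to membership in $U_a\cup U_{a^{-1}}$.
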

\begin{proof}
    We show that the preimage of any subbasic open set is open. 
Suppose that $\Phi(P) \in U_a$.  This happens if and only if $a \in G \setminus (P \sqcup P^{-1})$, equivalently, $a \notin P$ and $a \notin P^{-1}$.  We conclude that 
    \[\Phi^{-1}(U_a) = U_a^C \cap U_{a^{-1}}^C \cap \mathrm{LO}_{rel}(G),
\]
which is open in $\mathrm{LO}_{rel}(G)$.
Similarly suppose  $\Phi(P) \in U_a^C$, meaning $a \notin G \setminus (P \sqcup P^{-1})$.  This happens if and only if $a \in P$ or $a \in P^{-1}$, meaning 
   \[\Phi^{-1}(U_a^C) = (U_a \cup U_{a^{-1}} )\cap \mathrm{LO}_{rel}(G),
\]
    which is again open so that $\Phi$ is continuous.  Note that $\Phi$ is surjective by Proposition~\ref{prop : rel cone}, so compactness of $\mathrm{Conv}(G)$ follows.
\end{proof}

It is a standard result that $C \subset G $ is left-relatively convex if and only if the left cosets of $C$ admit a total ordering that is invariant under left-multiplication. (See~\cite[Chapter~2]{ClaRol}.)  Moreover, if $\phi : G \rightarrow G$ is an automorphism and $P \subset G$ is a relative cone, then $\phi(P)$ is also a relative cone.  It follows that $C$ is left-relatively convex if and only if $\phi(C)$ is left-relatively convex. We therefore define the \emph{space of finitely generated left-orderable groups} to be 
\[ \mathcal{LO} = \mathcal{N} \cap \mathrm{Conv}(\mathbb{F}_\infty).
\]
By Theorem \ref{thm:compactconvex} and the discussion of Subsection \ref{subsec:fggroups}, $\mathcal{LO}$ is standard Borel, and isomorphism of groups defines a countable Borel equivalence relation on $\mathcal{LO}$. 

\subsection{The space of finitely generated bi-orderable groups}

We can bootstrap the results of the previous section to deal with finitely generated bi-orderable groups, as follows.  First, we note that every bi-ordering $<$ determines a positive cone $P = \{ g \in G : g>id\}$ which is conjugation invariant, that is, $gPg^{-1} \subset P$ for all $g \in G$.  Conversely, a positive cone $P \subset G$ which is conjugation invariant determines a bi-ordering of $G$ via $g<h \iff g^{-1}h \in P$.

\begin{proposition}
\label{prop:birel}
Let $G$ be a group, and $N$ a normal subgroup of $G$.  Then $G/N$ is bi-orderable if and only if there exists $P \in \mathrm{LO}_{rel}(G)$ such that $N = G \setminus (P \sqcup P^{-1})$ and $gPg^{-1} = P$ for all $g \in G$.
\end{proposition}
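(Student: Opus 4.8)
The plan is to prove both directions by connecting conjugation-invariant relative cones with conjugation-invariant positive cones of a bi-ordering, leveraging the machinery already established in Proposition~\ref{prop : rel cone}. The key observation is that the final clause of Proposition~\ref{prop : rel cone} tells us that when $N$ is normal and $P$ is a relative cone with $N = G \setminus (P \sqcup P^{-1})$, the image $q(P)$ under the quotient map $q \colon G \rightarrow G/N$ is the positive cone of a left-ordering of $G/N$. So the entire task reduces to showing that the extra hypothesis $gPg^{-1} = P$ for all $g \in G$ is precisely what upgrades this left-ordering on $G/N$ to a \emph{bi}-ordering, i.e.\ makes $q(P)$ conjugation-invariant in $G/N$.

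First I would prove the forward direction. Assume $G/N$ is bi-orderable, with conjugation-invariant positive cone $\overline{P} \subset G/N$. Set $P = q^{-1}(\overline{P})$; this is the natural candidate. I would verify that $P \in \mathrm{LO}_{rel}(G)$ by checking the two conditions of Proposition~\ref{prop : rel cone}: that $P$ is a semigroup (it is the preimage of a semigroup), that $P \sqcup P^{-1} = G \setminus N$ (which follows from $\overline{P} \sqcup \overline{P}^{-1} = (G/N) \setminus \{\overline{id}\}$ together with $q^{-1}(\overline{id}) = N$), and that $NPN \subset P$ (which holds because $q(NPN) = q(P) = \overline{P}$ since $q$ kills $N$). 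Finally, conjugation-invariance $gPg^{-1} = P$ transfers from $\overline{P}$: since $q(gPg^{-1}) = q(g)\overline{P}q(g)^{-1} = \overline{P}$ and $gPg^{-1}$ is a union of $N$-cosets, it equals $q^{-1}(\overline{P}) = P$.

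For the converse, suppose we are given $P \in \mathrm{LO}_{rel}(G)$ with $N = G \setminus (P \sqcup P^{-1})$ and $gPg^{-1} = P$ for all $g \in G$. By the last part of Proposition~\ref{prop : rel cone}, $q(P)$ is the positive cone of a left-ordering of $G/N$. It remains to show $q(P)$ is conjugation-invariant. For any $\overline{g} = q(g) \in G/N$ and $\overline{h} = q(h) \in q(P)$, I would compute $\overline{g}\,\overline{h}\,\overline{g}^{-1} = q(ghg^{-1})$, and since $gPg^{-1} = P$ gives $ghg^{-1} \in P$, we conclude $\overline{g}\,\overline{h}\,\overline{g}^{-1} \in q(P)$. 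Thus $q(P)$ is a conjugation-invariant positive cone, so it determines a bi-ordering of $G/N$, and $G/N$ is bi-orderable.

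The steps here are all routine once the correspondence is set up; the main point requiring care is ensuring that $P = q^{-1}(\overline{P})$ in the forward direction genuinely is a \emph{relative cone} in the sense of Proposition~\ref{prop : rel cone}, rather than merely satisfying the conjugation condition, and dually that the conjugation-invariance hypothesis in the converse is strong enough to survive passage to the quotient. The only subtlety I anticipate is bookkeeping around the identity coset and verifying that $P$ being a union of full $N$-cosets is what makes the equalities $q^{-1}(q(P)) = P$ hold; this is where one uses that $N \cdot P = P$ and $P \cdot N = P$, which follow from $NPN \subset P$ together with the disjointness $P \cap N = \emptyset$. None of this should present a genuine obstacle, as the heavy lifting has already been done in Proposition~\ref{prop : rel cone}.
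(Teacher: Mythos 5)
Your proof is correct and follows essentially the same route as the paper: taking $P = q^{-1}(\overline{P})$ in the forward direction and invoking the final clause of Proposition~\ref{prop : rel cone} plus transfer of conjugation-invariance for the converse. The only (harmless) difference is that you justify the equality $gPg^{-1}=P$ via the observation that $P$ is a union of $N$-cosets, whereas the paper deduces it directly from $q(g)q(h)q(g)^{-1}\in Q$.
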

\begin{proof}
    Let $q \colon G \rightarrow G/N$ denote the quotient map, and suppose that $G/N$ is bi-orderable with positive cone $Q \subset G/N$.  Set $P = q^{-1}(Q)$, and note that $P$ is a semigroup with $NPN \subset P$.  Moreover, as  $G/N = Q \sqcup Q^{-1} \sqcup \{ id \}$, we conclude that $N = G \setminus (P \sqcup P^{-1})$, so $P \in \mathrm{LO}_{rel}(G)$.  Finally, if $h \in P$ and $g \in G$ then $q(h) \in Q$ and $q(g) \in G/N$ satisfy $q(g)q(h)q(g)^{-1} \in Q$.  But then $ghg^{-1} \in P$, as desired.

    Conversely, suppose $P \in \mathrm{LO}_{rel}(G)$ such that $N = G \setminus (P \sqcup P^{-1})$ and $gPg^{-1} = P$ for all $g \in G$.  Then by Proposition \ref{prop : rel cone}, the set $q(P)$ is the positive cone of a left-ordering of $G/N$.  But $gPg^{-1} = P$ for all $g \in G$ implies that $hq(P)h^{-1} = q(P)$ for all $h \in G/N$, so that $q(P)$ is the positive cone of a bi-ordering.
\end{proof}

Given a group $G$, let $\mathrm{BN}(G) \subset \mathrm{Subg}(G)$ denote the subspace of all normal subgroups $N$ of $G$ such that $G/N$ is bi-orderable.

\begin{theorem}
Given a group $G$, the space $\mathrm{BN}(G)$ is compact.
\end{theorem}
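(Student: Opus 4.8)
The plan is to realize $\mathrm{BN}(G)$ as the continuous image of a compact space, and thereby inherit compactness. The natural candidate for the domain is the subspace of $\mathrm{LO}_{rel}(G)$ consisting of conjugation-invariant relative cones, since Proposition~\ref{prop:birel} tells us precisely that these are the relative cones $P$ whose associated convex subgroup $\Phi(P) = G \setminus (P \sqcup P^{-1})$ lies in $\mathrm{BN}(G)$.

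First I would set
\[ \mathrm{LO}^{bi}_{rel}(G) = \{ P \in \mathrm{LO}_{rel}(G) : gPg^{-1} = P \text{ for all } g \in G \}
\]
and check that this is a closed subset of $\mathrm{LO}_{rel}(G)$. The condition that $P$ fail to be conjugation invariant says that there exist $g, h \in G$ with either $h \in P$ and $ghg^{-1} \notin P$, or $h \notin P$ and $ghg^{-1} \in P$; each alternative places $P$ in one of the basic open sets $U_h \cap U_{ghg^{-1}}^C$ or $U_h^C \cap U_{ghg^{-1}}$. Taking the union over all $g, h \in G$ exhibits the set of non-conjugation-invariant subsets as open in $2^G$, so the conjugation-invariant ones form a closed set. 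Intersecting with $\mathrm{LO}_{rel}(G)$, which is compact by Proposition~\ref{prop:compactrel}, shows that $\mathrm{LO}^{bi}_{rel}(G)$ is a closed subset of a compact space and hence compact.

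Next I would invoke Proposition~\ref{prop:birel} to identify
\[ \mathrm{BN}(G) = \Phi\big(\mathrm{LO}^{bi}_{rel}(G)\big),
\]
where $\Phi$ is the continuous surjection of Theorem~\ref{thm:compactconvex}. Indeed, that proposition says $N \in \mathrm{BN}(G)$ if and only if $N = G \setminus (P \sqcup P^{-1}) = \Phi(P)$ for some conjugation-invariant $P \in \mathrm{LO}_{rel}(G)$, which is exactly the statement that $N$ lies in the image of $\mathrm{LO}^{bi}_{rel}(G)$ under $\Phi$. Since $\Phi$ is continuous and $\mathrm{LO}^{bi}_{rel}(G)$ is compact, the image $\mathrm{BN}(G)$ is compact, completing the argument.

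As for obstacles, there is really only one point that requires care: confirming that the equality $\mathrm{BN}(G) = \Phi(\mathrm{LO}^{bi}_{rel}(G))$ holds set-theoretically in both directions, and this is handled entirely by Proposition~\ref{prop:birel}. The remaining topological content — closedness of the conjugation-invariance condition and preservation of compactness under continuous images — is routine. The only mild subtlety is that $\Phi$ need not be injective, but this is irrelevant, since the conclusion relies solely on continuity of $\Phi$ together with compactness of its domain, not on $\Phi$ being a homeomorphism onto its image.
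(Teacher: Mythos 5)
Your argument is correct and follows essentially the same route as the paper: identify the conjugation-invariant relative cones as a closed (hence compact) subset of $\mathrm{LO}_{rel}(G)$, then apply Proposition~\ref{prop:birel} to see that $\Phi$ restricts to a continuous surjection from this subset onto $\mathrm{BN}(G)$. The only cosmetic difference is that you verify closedness by exhibiting the complement as a union of subbasic open sets, whereas the paper observes that this set is the common fixed-point set of the homeomorphisms of $\mathrm{LO}_{rel}(G)$ induced by inner automorphisms of $G$.
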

\begin{proof}
Given an automorphism $\phi: G \rightarrow G$, one can check that the induced map $\mathrm{LO}_{rel}(G) \rightarrow \mathrm{LO}_{rel}(G)$ given by $P \mapsto \phi(P)$ is a homeomorphism.  The set 
$$\mathrm{BO}_{rel}(G)  = \{ P \in \mathrm{LO}_{rel}(G) : gPg^{-1} = P \mbox{ for all } g \in G\}$$
is precisely the fixed point set of all such homeomorphisms that are induced by inner automorphisms of $G$.  It is therefore a closed subset of $\mathrm{LO}_{rel}(G)$, and so is a compact space.  Recalling the continuous map $ \Phi \colon \mathrm{LO}_{rel}(G) \rightarrow \mathrm{Conv}(G)$ from Theorem \ref{thm:compactconvex}, by Proposition \ref{prop:birel} the restriction 
\[ \Phi\colon \mathrm{BO}_{rel}(G) \rightarrow \mathrm{BN}(G)
\]
is surjective, showing that $\mathrm{BN}(G)$ is compact.
\end{proof}

As in the previous section, we can therefore define the \emph{space of finitely generated bi-orderable groups} to be 
\[ \mathcal{BO} = \mathcal{N} \cap \mathrm{BN}(\mathbb{F}_\infty).
\]
Similar to the case of $\mathcal{LO}$, this space is standard Borel, and isomorphism of groups defines a countable Borel equivalence relation on $\mathcal{BO}$.

Note, however, that neither $\mathcal{BO}$ nor $\mathcal{LO}$ is closed in $2^{\mathbb{F}_\infty}$.  To see this, let $\phi_k\colon \mathbb{F}_\infty \rightarrow \mathbb{Z}^k$ be the homomorphism given by 

\[
\phi_k(x_i) =
\begin{cases}
    (0, \ldots, 1, \ldots, 0) &\text{if \( 1 \leq i \leq k\)}\\
    0 &\text{otherwise};
\end{cases}
\]
where the one in the first expression appears in the $i$-th position.  Set $N_k = \ker \phi_k$, and note that $N_k \in \mathcal{BO}$ for all $k \geq 1$.  However, the sequence $\{ N_k\}$ converges (in $2^{\mathbb{F}_\infty}$) to the commutator subgroup $[\mathbb{F}_\infty, \mathbb{F}_\infty]$ whose quotient is 
the infinite rank free abelian group, and so does not lie in $\mathcal{BO}$, nor in $\mathcal{LO}$.

\subsection{Weak universality of isomorphism of bi-orderable groups}

In this section we prove our main theorem, assuming the existence of a group having certain special properties.  We construct a group having these properties in the next section.

\begin{theorem}
\label{thm:main}
    Let $F$ be a finitely generated nonabelian free group, and $P \subset F$ the positive cone of a bi-ordering.  Suppose that $H$ is a finitely generated group, and that the centre $Z(H)$ contains a free abelian group with free generators $\{a_g\}_{g \in P}$.  Given a nonempty subset $S \subset P$, let $A_S$ denote the subgroup of $Z(H)$ generated by $\{a_g\}_{g \in S}$.  Suppose further that:
    \begin{enumerate}
        \item
        \label{assumption1}
        The group $H/A_S$ is bi-orderable for all $S \subset P$, and
        \item 
        \label{assumption2}
        If $S', S \subset P$ and there exists $h \in F$ such that $S' = hSh^{-1}$ then $H/A_S \cong H/A_{S'}$.
    \end{enumerate}
    Then $(\mathcal{BO}, \cong)$ is weakly universal.
\end{theorem}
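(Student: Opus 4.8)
\section*{Proof proposal}

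The plan is to produce a countable-to-one Borel homomorphism $f$ from the conjugacy relation $E_c(F)$ on $\Subg(F)$ into the isomorphism relation $\cong$ on $\mathcal{BO}$. Taking $F=\mathbb{F}_n$ with $n\ge 2$, the Velickovic--Thomas theorem makes $E_c(F)$ a universal countable Borel equivalence relation, and then for every countable Borel equivalence relation $E'$ we have $E'\le_B E_c(F)$; composing such a reduction $g$ with $f$ yields again a countable-to-one Borel homomorphism (a Borel reduction $g$ has countable point-preimages, since $g(x)=g(x')$ forces $x\mathbin{E'}x'$, so $g^{-1}$ of a countable set is countable), giving $E'\le^w_B\,\cong$ and hence weak universality. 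The bridge between the conjugation action on subgroups and the family $\{H/A_S\}$ is the assignment $K\mapsto S_K:=K\cap P$. Because $P$ is the positive cone of a bi-ordering it satisfies $hPh^{-1}=P$, so $F=P\sqcup\{id\}\sqcup P^{-1}$ and $K\cap P^{-1}=(K\cap P)^{-1}$; thus $K=S_K\sqcup\{id\}\sqcup S_K^{-1}$ is recovered from $S_K$, making $K\mapsto S_K$ injective, and a one-line check gives $hKh^{-1}\cap P=h(K\cap P)h^{-1}$, so this map is equivariant for the conjugation actions.

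To build $f$, I would fix a finite generating tuple $h_1,\dots,h_m$ of $H$ and the induced surjection $\varepsilon\colon\mathbb{F}_\infty\to H$ sending $x_i\mapsto h_i$ for $i\le m$ and $x_i\mapsto id$ for $i>m$. For nontrivial $K\in\Subg(F)$ set $S_K=K\cap P$ (nonempty) and let $N_K$ be the kernel of the composite $\mathbb{F}_\infty\xrightarrow{\varepsilon}H\to H/A_{S_K}$. Then $\mathbb{F}_\infty/N_K\cong H/A_{S_K}$ is finitely generated, is bi-orderable by assumption~\ref{assumption1}, and $N_K$ contains $x_i$ for all $i>m$; hence $N_K\in\mathcal{BO}$. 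I would define $f(K)=N_K$ on nontrivial subgroups and $f(\langle id\rangle)=N_0$ for an arbitrary fixed $N_0\in\mathcal{BO}$.

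The three verifications go as follows. For Borelness, observe that for each word $u\in\mathbb{F}_\infty$ the element $\varepsilon(u)\in H$ does not depend on $K$, and $u\in N_K$ iff $\varepsilon(u)\in A_{S_K}$. Writing $A=\langle a_g: g\in P\rangle$ for the free abelian subgroup, membership $\varepsilon(u)\in A_{S_K}$ holds iff $\varepsilon(u)\in A$ and the finite support $T_u\subseteq P$ of $\varepsilon(u)$ satisfies $T_u\subseteq S_K$; since $T_u\subseteq P$ this is equivalent to $T_u\subseteq K$, a clopen condition on $K$. Hence $\{K:u\in N_K\}$ is clopen for every $u$, so $K\mapsto N_K$ is Borel, and adjusting on the single point $\langle id\rangle$ keeps $f$ Borel. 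For the homomorphism property, $K\mathbin{E_c(F)}K'$ gives $K'=hKh^{-1}$, hence $S_{K'}=hS_Kh^{-1}$, and assumption~\ref{assumption2} yields $H/A_{S_K}\cong H/A_{S_{K'}}$, i.e.\ $f(K)\cong f(K')$; pairs involving $\langle id\rangle$ are vacuous as $\{\langle id\rangle\}$ is a singleton class. For countable-to-one, I would show $f$ is injective on nontrivial subgroups: if $N_K=N_{K'}$ then the kernels $R_K=\varepsilon^{-1}(A_{S_K})\cap\langle x_1,\dots,x_m\rangle$ coincide, so $A_{S_K}=\varepsilon(R_K)=A_{S_{K'}}$, whence $S_K=S_{K'}$ by freeness of $A$, and finally $K=K'$; thus every fiber of $f$ has at most two points.

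The genuinely delicate points, and the main obstacle, are first reducing membership $u\in N_K$ to the clopen condition $T_u\subseteq K$ --- this is exactly where centrality and the free-abelian structure of $\{a_g\}_{g\in P}$ are essential, since they guarantee a well-defined support and that $A_S$ determines $S$ --- and second the bookkeeping showing $K\mapsto S_K$ is simultaneously injective and conjugation-equivariant, which is what lets assumption~\ref{assumption2} furnish precisely the homomorphism property (and no more, consistent with obtaining only a \emph{weak} reduction). I would also flag that the scheme requires $F$ non-abelian: for $F=\mathbb{Z}$ the conjugation action is trivial and $E_c(F)$ is smooth, so the construction could not yield weak universality. Everything else, including the composition step with an arbitrary $E'\le_B E_c(F)$, is routine.
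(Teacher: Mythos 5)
Your proposal is correct and follows essentially the same route as the paper: the map $K \mapsto \ker(\mathbb{F}_\infty \to H/A_{K\cap P})$, continuity via the clopen support condition $T_u \subseteq K$ (the paper's Case 3), conjugation-equivariance of $K \mapsto K \cap P$ from $hPh^{-1}=P$, and assumption~(2) for the homomorphism property. Your extra care with the trivial subgroup and the explicit injectivity argument are points the paper glosses over, but they do not change the argument.
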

\begin{proof}
Suppose that $H$ has generators $\{h_1, \ldots, h_n\}$, and let \(\theta\colon \mathbb{F}_\infty \to H\) denote the homomorphism defined by $\theta(x_i) = h_i$ for $1  \leq i \leq n$, and $\theta(x_i) = id$ for $i > n$. For any subgroup \(G\in \Subg(F)\) consider the quotient map \(q_{P\cap G}\colon H\to H/A_{P\cap G}\). Then define
\begin{align*}
    f\colon \Subg(F) &\to \mathcal{BO}\\
    G & \mapsto N_G=\ker(q_{P\cap G}\circ \theta).
\end{align*}

Note that by our choice of homomorphism $\theta$, we have \(N_G \in \mathcal{N}\). In fact, assumption~\eqref{assumption1} ensures that \(H/A_{P\cap G}\) is a bi-orderable group, therefore \(N_G\in \mathcal{BO}\).

\begin{claim}
\label{claim : wB}
    The map \(f\) sending a subgroup $ G \in \Subg(F)$ to \(N_G\) is weak Borel reduction from \(\
E_c(F)\) to \(\cong_\mathcal{BO}\).
\end{claim}

\begin{proof}[Proof of Claim~\ref{claim : wB}]
    It is clear from the definition that \(f\) is one-to-one.
    To see it is Borel, we use the subbases 
    \[
    U_g=\{N \in \mathcal{BO}:g\in N \}\qquad U^C_g=\{N \in \mathcal{BO}:g\notin N\}
    \]
    and
     \[
    V_g=\{G \in \Subg(F):g\in G \}\qquad V^C_g=\{G \in \Subg(F):g\notin G\}
    \]
    of $\mathcal{BO}$ and $\Subg(F)$ respectively to show that $f$ is in fact continuous.  Now, fixing $g \in \mathbb{F}_\infty$, note that 
     \begin{align*}
        f^{-1}(U_g) =& \{G \in  \Subg(F) : g\in N_G\}\\
        =&\{G \in  \Subg(F) : g\in \ker (q_{P\cap G}\circ \theta)\}\\
        =&\{G \in  \Subg(F) : \theta(g)\in \ker(q_{P\cap G})\},
    \end{align*}
and similarly $f^{-1}(U_g^C) = \{ G \in \Subg(F):\theta(g) \notin \ker(q_{P \cap G})\}$.  Using this, we consider cases.

    \noindent \textbf{Case 1.} $\theta(g) \notin A_P$. Then no $G \in \Subg(F)$ satisfies $\theta(g)\in \ker(q_{P\cap G}) = A_{P \cap G} \subset A_P$, so $f^{-1}(U_g) = \emptyset$ and $f^{-1}(U_g^C) = \Subg(F)$.

\noindent \textbf{Case 2.} $\theta(g) = id$.  Then every $G \in \Subg(F)$ satisfies $\theta(g)\in \ker(q_{P\cap G})$, so $f^{-1}(U_g) = \Subg(F)$ and $f^{-1}(U_g^C) = \emptyset$.

\noindent \textbf{Case 3.}  $\theta(g) \in A_P \setminus \{id\}$, suppose $\theta(g) = \sum_{i=1}^k n_i a_{g_i}$ where $g_1, \ldots, g_k \in P$ and $n_i \neq 0$ for all $i$.  Then $\theta(g) \in  A_{P \cap G}$ if and only if $a_{g_1}, \ldots, a_{g_k} \in A_{P \cap G}$, which happens if and only if $g_1, \ldots, g_k \in G$.  This is equivalent to $G \in V_{g_1} \cap \ldots \cap V_{g_k}$.  Thus $f^{-1}(U_g) = V_{g_1} \cap \ldots \cap V_{g_k}$ and $f^{-1}(U_g^C) = V_{g_1}^C \cup \ldots \cup V_{g_k}^C$.

    To prove the claim, it therefore remains to prove that if \(G_1\) and \(G_2\) are conjugate subgroups of $F$, then $\mathbb{F}_\infty/N_{G_1} \cong \mathbb{F}_\infty/N_{G_2}$, or equivalently, \(H/A_{P\cap G_1} \cong H/A_{P\cap G_2}\). To see this let \(G_2 = hG_1 h^{-1}\) for some \(h\in F\). Then,
    \(P\cap G_2 = P\cap hG_1h^{-1}\) becomes
    \(P\cap G_2 = h(P\cap G_1)h^{-1}\) because \(P\) is the positive cone of a bi-order, and so it is invariant under conjugation. Then the desired property follows from assumption~\eqref{assumption2}.  This concludes the proof of the claim.
\end{proof}

Since \(E_c(F)\) is universal, we conclude that \(\cong_\mathcal{BO}\) is weakly universal as desired.
\end{proof}

\section{Constructing the required group}
\label{sec:cocycle}

Let $A$ be an abelian group, and $G$ a group.  We begin with a brief review of a well-known construction of a central extension of $G$ by $A$, which we will use to construct a group $H$ having the properties required by Theorem \ref{thm:main}.

A normalized, inhomogeneous 2-cocycle is a function $f : G^2 \rightarrow A$ satisfying \begin{enumerate}
    \item $f(id, g) = f(g, id) = 0$ for all $g \in G$, and
    \item \label{2-cocycle:2} $f(h, k) - f(gh, k) + f(g, hk) - f(g,h) = 0$ for all $g, h, k \in G$.
\end{enumerate}
  From such an $f$ we define a central extension $G_f$ of $G$ by $A$, by setting $G_f = A \times G$ as a set, and equipping it with the operation
\[ (a, g) (b, h) = (a+b+f(g,h), gh).
\]
One checks that $G_f$ is a group, and that $A \times \{id_G\}$ is central.  Computations later in this section will rely upon the fact that $f(g^{-1}, g) = f(g, g^{-1})$ for all $g \in G$, which we can see by applying~\eqref{2-cocycle:2} above to the triple of elements $g, g^{-1}, g \in G$. From this, one also computes that $(a, g)^{-1} = (-a-f(g,g^{-1}), g^{-1})$.

Elements of $H^2(G; A)$ are represented by normalized, inhomogeneous $2$-cocycles $f \colon G^2 \rightarrow A$. While not needed here, the construction above establishes a bijection between elements of $H^2(G; A)$ and equivalence classes of central extensions
\[ 1 \longrightarrow A \longrightarrow H \longrightarrow G \longrightarrow 1,
\]
See \cite[Chapter 4]{brown94} for more details.

One can create automorphisms of $G_f$ as follows.  

\begin{lemma}
\label{lem:aut1}
Suppose that $\phi_1 \colon A \rightarrow A$ and $\phi_2 \colon G \rightarrow G$ are automorphisms, and $f\colon G^2 \rightarrow A$ is a normalized, inhomogeneous 2-cocycle.  Then $\phi(a, g) = (\phi_1(a), \phi_2(g))$ defines an automorphism  $\phi \colon G_f \rightarrow G_f$ if and only if $\phi_1, \phi_2$ satisfy 
\[ \phi_1(f(g, h)) = f(\phi_2(g), \phi_2(h)).
\]
\end{lemma}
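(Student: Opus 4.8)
The plan is to verify directly, using the group operation defined on $G_f$, that the stated condition on $\phi_1, \phi_2$ is exactly what is needed for the candidate map $\phi(a,g) = (\phi_1(a), \phi_2(g))$ to respect multiplication. Since $\phi_1$ and $\phi_2$ are already assumed to be bijective, the map $\phi$ is automatically a bijection of the underlying set $A \times G$; thus the content of the lemma is entirely about the homomorphism property, and the ``if and only if'' will fall out of a single computation in which the cocycle identity never actually needs to be invoked.

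First I would compute $\phi((a,g)(b,h))$ by applying the group law and then $\phi$, obtaining
\[
\phi\bigl((a,g)(b,h)\bigr) = \phi\bigl(a+b+f(g,h),\, gh\bigr) = \bigl(\phi_1(a)+\phi_1(b)+\phi_1(f(g,h)),\, \phi_2(gh)\bigr),
\]
where I use additivity of $\phi_1$ on the first coordinate and the fact that $\phi_2$ is a homomorphism to write $\phi_2(gh)=\phi_2(g)\phi_2(h)$. Next I would compute the other order, namely $\phi(a,g)\,\phi(b,h)$, again by the group law:
\[
\phi(a,g)\,\phi(b,h) = \bigl(\phi_1(a),\phi_2(g)\bigr)\bigl(\phi_1(b),\phi_2(h)\bigr) = \bigl(\phi_1(a)+\phi_1(b)+f(\phi_2(g),\phi_2(h)),\, \phi_2(g)\phi_2(h)\bigr).
\]
The second coordinates of the two expressions already agree, so $\phi$ is multiplicative precisely when the first coordinates agree for all $(a,g),(b,h)$. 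Cancelling the common terms $\phi_1(a)+\phi_1(b)$, this is equivalent to $\phi_1(f(g,h)) = f(\phi_2(g),\phi_2(h))$ holding for all $g,h \in G$, which is exactly the stated condition.

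For the forward direction I would note that if $\phi$ is an automorphism then the two first coordinates must coincide, yielding the displayed identity; for the converse, the identity guarantees multiplicativity, and combined with bijectivity (inherited from $\phi_1,\phi_2$) this makes $\phi$ an automorphism. I do not anticipate a genuine obstacle here: the argument is a routine matching of coordinates, and the only point requiring a sentence of care is observing that bijectivity of $\phi$ is free once $\phi_1,\phi_2$ are bijections, so that the entire equivalence reduces to the homomorphism condition. It is also worth remarking that, unlike some later computations flagged in the text, this verification does not require the cocycle relation~\eqref{2-cocycle:2} at all.
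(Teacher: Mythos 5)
Your proof is correct and follows essentially the same route as the paper: compute $\phi((a,g)(b,h))$ and $\phi(a,g)\phi(b,h)$ via the group law, match coordinates to reduce the homomorphism property to $\phi_1(f(g,h)) = f(\phi_2(g),\phi_2(h))$, and observe that bijectivity of $\phi$ is inherited from that of $\phi_1$ and $\phi_2$. Your added remark that the cocycle identity is not needed here is accurate and consistent with the paper's argument.
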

\begin{proof}Observe that $\phi$ is a homomorphism if and only if
\[ \phi((a, g)(b, h)) = \phi(a+b+f(g,h), gh) = (\phi_1(a+b+f(g,h)), \phi_2(gh))
\]
and
\begin{multline*}
    \phi(a,g)\phi(b,h) = (\phi_1(a), \phi_2(g))(\phi_1(b), \phi_2(h)) =\\ (\phi_1(a)+\phi_1(b)+f(\phi_2(g), \phi_2(h)),\phi_2(g) \phi_2(h) )
\end{multline*}
are equal, which holds if and only if $\phi_1(f(g, h)) = f(\phi_2(g), \phi_2(h))$.  Note also that $\phi$ is injective (resp. surjective) if and only if both $\phi_1$ and $\phi_2$ are injective (resp. surjective).
\end{proof}

\subsection{Central extensions from left-orderable groups}

This construction is inspired by \cite[Exercise 2.E.35]{loh17}.  Let \(G\) be a countable left-orderable group and fix a positive cone \(P\), with the associated left-ordering $<_P$ of $G$.

Let \(A\) be the free abelian group generated by \(P\), with free abelian generators $\{a_g\}_{g \in P}$, and let \(B\) be the free abelian group generated by \(G\), with free abelian generators $\{b_g\}_{g\in G}$. 

For any two generators \(b_g, b_h \)  of \(B\) set
\[
f(b_g, b_h) =
\begin{cases}
    a_{g^{-1}h} &\text{if \(g<_Ph\)}\\
    0_A &\text{otherwise}.
\end{cases}
\]
The elements of $A$ and \(B\) can be expressed as linear combinations $\sum_j s_ja_{h_j}$ and \(\sum_i t_ib_{g_i}\) respectively for \(t_i, s_j\in \mathbb{Z}\), and $h_j \in P$,  \(g_i \in G\). It is therefore convenient to think of $A$ and \(B\) as a \(\mathbb{Z}\)-modules, and extend \(f\) to a function \(f  \colon B\times B \to A\) by linearity, so that \(f\) is bilinear. It follows that \(f\) is a normalized, inhomogeneous $2$-cocycle.

Now, let $B_f$ denote the central extension of $B$ by $A$ whose underlying set is $A \times B$, equipped with multiplication
\[ (a, b)(a',b') = (a+a'+f(b,b'), b+b').
\]

Note that the group \(G\) acts on both $B$ and $B_f$
by left-multiplication on the indices of elements in $B$.
For \(g\in G\) and $\sum_j t_jb_{g_j} \in B$ we define
\[ g \cdot \sum_j t_jb_{g_j} = \sum_j t_jb_{gg_j},
\]
and for \(\left(\sum_i s_ia_{g_i}, \sum_j t_jb_{g_j}\right) \in B_f\) we define
\[ g 
\cdot\left(\sum_i s_ia_{g_i}, \sum_j t_jb_{g_j}\right) = \left(\sum_i s_ia_{g_i}, g \cdot \sum_j t_jb_{g_j}\right) = \left(\sum_i s_ia_{g_i}, \sum_j t_jb_{g g_j}\right).
\]

This prescription clearly defines a left action of $G$ on $B_f$.  Let $H(G, P)$ denote the semidirect product $B_f \rtimes G$ constructed using this action; we can therefore write the elements of $H(G,P)$ as
\[
\left(\left(\sum_i s_i a_{h_i} , \sum_j t_j b_{g_j} \right), g \right)
\]
where $s_i, t_j \in \mathbb{Z}$, $h_i \in P$, and $g, g_i \in G$.

Note that\footnote{Here we identify \(A\) with \(\{((a,0_B), id_G)\in H(G,P):a\in A\}\).} \(A\) is central in \(H(G,P)\),
because the action of \(G\) on \(B_f\) that we use in creating the semidirect product is
trivial upon restriction to \(A \times \{ 0_B \} \subset B_f \), which itself is central in \(B_f\). 

\begin{proposition}
   If $G$ is a finitely generated left-orderable group, then the group $H(G,P)$ is finitely generated.
\end{proposition}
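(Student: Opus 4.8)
The plan is to exhibit an explicit finite generating set and check that it generates all of $H(G,P)=B_f \ltimes G$. Since $G$ is finitely generated, fix generators $g_1,\dots,g_k$ of $G$ and consider the finite set consisting of the elements $((0_A,0_B),g_i)$ for $1\le i\le k$ together with the single element $c=((0_A,b_{id}),id_G)$. Let $\Gamma\le H(G,P)$ be the subgroup they generate; the goal is to prove $\Gamma=H(G,P)$.

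First I would note that $\Gamma$ contains the copy of $G$ inside $H(G,P)$, namely all elements $((0_A,0_B),g)$ with $g\in G$, since these form a subgroup isomorphic to $G$ and the $g_i$ generate it. Next, a direct computation in the semidirect product shows that conjugating $c$ by $((0_A,0_B),g)$ yields $((0_A,b_g),id_G)$; this uses only that $G$ acts on $B$ by $g\cdot b_h=b_{gh}$ and acts trivially on $A$. Hence $\Gamma$ contains $((0_A,b_g),id_G)$ for every $g\in G$.

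The crucial step is to recover the generators of $A$. Because $f(b_g,b_g)=0_A$, one checks that $((0_A,b_g),id_G)^{-1}=((0_A,-b_g),id_G)$, and a short commutator computation inside the central extension gives
\[ [((0_A,b_g),id_G),((0_A,b_h),id_G)] = ((f(b_g,b_h)-f(b_h,b_g),0_B),id_G). \]
Taking $g=id_G$ and any $h\in P$, so that $id_G<_P h$, the right-hand side equals $((a_h,0_B),id_G)$. Thus $\Gamma$ contains the generator $a_h$ of $A$ for every $h\in P$, and therefore contains the whole central subgroup $A$.

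Finally I would combine these facts. Since $\Gamma$ contains every $((0_A,b_g),id_G)$ and every $((a_h,0_B),id_G)$, and $A$ is central in $B_f$, taking products of the former produces elements of the form $((a',b),id_G)$ for an arbitrary $b\in B$ (as $B$ is free abelian on $\{b_g\}_{g\in G}$), and then correcting the $A$-coordinate by a suitable central element of $A$ shows that $\Gamma\supseteq B_f\times\{id_G\}$. Together with the copy of $G$, and since $H(G,P)=B_f\ltimes G$ is generated by these two subgroups, we conclude $\Gamma=H(G,P)$, so $H(G,P)$ is finitely generated. I expect the main obstacle to be this middle step: $A$ is a free abelian group of infinite rank and is not reachable from $B$ and $G$ by the obvious conjugation moves, so one must exploit the commutator formula in the central extension together with the fact that every index $h$ appearing in a generator $a_h$ of $A$ satisfies $id_G<_P h$, which is exactly what lets each $a_h$ appear as a commutator.
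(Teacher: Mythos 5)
Your proposal is correct and is essentially the paper's own argument run in the opposite direction: you start from the finite set $\{((0_A,b_{id_G}),id_G)\}\cup\{((0_A,0_B),g_i)\}$ and build up, using exactly the two identities the paper uses (conjugation by the copy of $G$ to produce every $((0_A,b_g),id_G)$, and the commutator formula $[(0_A,b_{id_G}),(0_A,b_h)]=(a_h,0_B)$ for $h\in P$ to recover the central generators of $A$). The computations check out, so this is the same proof up to presentation.
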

\begin{proof}
    To see this, we first note that the following is a generating set for $H(G,P)$:
\[ \{ ((a_h, 0_B), id_G)\}_{h \in P} \cup \{ ((0_A, b_g), id_G)\}_{g \in G} \cup \{((0_A, 0_B), e_i) \}_{i=1}^n
\]
where $e_i$, for \(i=1,\dotsc,n\), denote generators for \(G\).  Moreover, in the group $B_f$ the following identity holds:
\begin{multline*}
     [(0_A, b_g), (0_A, b_h)] = (0_A, b_g)(0_A, b_h)(f(b_g, b_g), -b_g)(f(b_h, b_h), -b_h) =\\
     (f(b_g, b_h)-f(b_h, b_g), 0_B).
\end{multline*}

Thus, for an arbitrary $h \in P$ we have $[(0_A, b_0), (0_A, b_h)] = (a_h, 0_B)$, meaning that $\{ (0_A, b_g) \}_{g \in G}$ constitutes a generating set of $B_f$, so that 
\[\{ ((0_A, b_g), id_G)\}_{g \in G} \cup \{((0_A, 0_B), e_i) \}_{i=1}^n
\]
is a generating set of $H$.  We further calculate that for all $g \in G$:
\[ ((0_A,0_B), e_i)((0_A, b_g), id_G)((0_A,0_B), e_i)^{-1} = 
((0_A, b_{e_ig}), id_G).
\]
So in fact, since $\{e_i\}_{i=1}^n$ generate $G$, it suffices to take 
\[\{ ((0_A, b_{id_G}), id_G)\} \cup \{((0_A, 0_B), e_i) \}_{i=1}^n
\]
as a generating set of $H$, which is finite.
\end{proof}

In order to create automorphisms of $H(G,P)$, we begin with a brief observation about semidirect products in general.  The proof is a straightforward computation, so we omit it.

\begin{lemma}
\label{lem:aut2}
Suppose that $N$ and $K$ are groups, and that $K$ acts on $N$ from the left, with the action being denoted by $k \cdot n$ for all $k \in K$ and $n \in N$.  If $\psi_1 \colon N \rightarrow N$ and $\psi_2 \colon K \rightarrow K$ are automorphisms of $N$ and $K$ respectively, then $\psi(n, k) = (\psi_1(n), \psi_2(k))$ defines an automorphism $\psi\colon N \rtimes K \rightarrow N \rtimes K$ if and only if the automorphisms $\psi_1$ and $\psi_2$ satisfy
\[ \psi_1(k \cdot n) = \psi_2(k) \cdot \psi_1(n)
\]
for all $n \in N$ and $k \in K$.
\end{lemma}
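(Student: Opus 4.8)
The plan is to reduce the whole statement to the single question of when $\psi$ is a homomorphism. Since $\psi_1$ and $\psi_2$ are bijections of $N$ and $K$ respectively, the product map $\psi(n,k) = (\psi_1(n), \psi_2(k))$ is automatically a bijection of the underlying set $N \times K$; a set-theoretic inverse is given by $(n,k) \mapsto (\psi_1^{-1}(n), \psi_2^{-1}(k))$. Thus $\psi$ is an automorphism precisely when it preserves the semidirect-product operation, which for a left action of $K$ on $N$ reads
\[ (n_1, k_1)(n_2, k_2) = (n_1\,(k_1 \cdot n_2),\, k_1 k_2).
\]

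To test the homomorphism property I would simply expand both sides of the identity $\psi((n_1,k_1)(n_2,k_2)) = \psi(n_1,k_1)\,\psi(n_2,k_2)$. On the left, applying $\psi$ to the product above and using that $\psi_1$ and $\psi_2$ are themselves homomorphisms gives $(\psi_1(n_1)\,\psi_1(k_1 \cdot n_2),\, \psi_2(k_1)\psi_2(k_2))$. On the right, multiplying the images $(\psi_1(n_1), \psi_2(k_1))$ and $(\psi_1(n_2), \psi_2(k_2))$ according to the same rule gives $(\psi_1(n_1)\,(\psi_2(k_1) \cdot \psi_1(n_2)),\, \psi_2(k_1)\psi_2(k_2))$. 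The second coordinates coincide unconditionally, so the two expressions are equal if and only if their first coordinates agree for all $n_1, n_2 \in N$ and $k_1 \in K$.

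Cancelling the common left factor $\psi_1(n_1)$ from the first coordinates reduces this requirement to $\psi_1(k_1 \cdot n_2) = \psi_2(k_1) \cdot \psi_1(n_2)$, which is exactly the compatibility relation in the statement. Since every step in this comparison is an equivalence, the argument establishes both implications simultaneously. I expect no genuine obstacle here: the only points requiring care are fixing the convention for the semidirect-product multiplication so that it matches the one used to build $H(G,P)$, and justifying the left-cancellation step (which is legitimate precisely because $N$ is a group). The argument is otherwise a direct computation, in keeping with the paper's remark that it may be omitted.
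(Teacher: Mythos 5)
Your proof is correct: the paper omits this proof as ``a straightforward computation,'' and your argument is exactly that computation, with the right convention $(n_1,k_1)(n_2,k_2)=(n_1(k_1\cdot n_2),k_1k_2)$ matching the construction of $H(G,P)$. The observation that bijectivity of $\psi$ is automatic, so that everything reduces to the homomorphism identity and then to left cancellation in $N$, is precisely what is intended.
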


In the next lemma, we use the notation $\Aut(G, <_P)$ to denote the group of all automorphisms $\phi\colon G \rightarrow G$ such that $g<_Ph$ if and only if $\phi(g)<_P \phi(h)$ for all $g, h \in G$---that is, the group of all automorphisms that preserve the ordering $<_P$.


\begin{lemma}
\label{lem:aut3}
Let $G$ be a finitely generated left-orderable group.
    There is an embedding \(\Psi\colon \Aut(G, <_P) \to \Aut(H(G, P))\), given by $\Psi(\phi) =\widetilde \phi$ where
    $$\widetilde\phi((a_h, 0_B), id_G) = ((a_{\phi(h)}, 0_B), id_G)$$ for all \(h\in P\).
\end{lemma}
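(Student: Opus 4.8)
The plan is to build $\widetilde\phi$ by assembling component automorphisms and applying Lemmas~\ref{lem:aut1} and~\ref{lem:aut2} in turn. First I would observe that any $\phi \in \Aut(G, <_P)$ preserves the positive cone $P$: since $g \in P$ exactly when $id <_P g$, and $\phi$ fixes $id$ and preserves $<_P$, we have $g \in P$ if and only if $\phi(g) \in P$. Hence $\phi$ restricts to a bijection of $P$, and I can define $\phi_A \in \Aut(A)$ by $\phi_A(a_g) = a_{\phi(g)}$ for $g \in P$, and $\phi_B \in \Aut(B)$ by $\phi_B(b_g) = b_{\phi(g)}$ for $g \in G$, extending both by linearity. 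Each simply permutes the free generators of its respective free abelian group, so both are genuine automorphisms.

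The crucial step is verifying the compatibility hypothesis of Lemma~\ref{lem:aut1}, namely $\phi_A(f(b_g, b_h)) = f(\phi_B(b_g), \phi_B(b_h))$ for all $g, h \in G$. This is exactly where the order-preservation of $\phi$ does the work. On one side, $\phi_A(f(b_g, b_h))$ equals $a_{\phi(g^{-1}h)} = a_{\phi(g)^{-1}\phi(h)}$ when $g <_P h$ and $0_A$ otherwise; on the other side, $f(b_{\phi(g)}, b_{\phi(h)})$ equals $a_{\phi(g)^{-1}\phi(h)}$ precisely when $\phi(g) <_P \phi(h)$, which by hypothesis is equivalent to $g <_P h$. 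The two sides therefore agree on generators, and by bilinearity of $f$ the identity extends to all of $B$. Lemma~\ref{lem:aut1} then produces an automorphism $\phi_{B_f}(a, b) = (\phi_A(a), \phi_B(b))$ of $B_f$.

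Next I would feed $\phi_{B_f}$ and $\phi$ into Lemma~\ref{lem:aut2} with $N = B_f$ and $K = G$. The required condition $\phi_{B_f}(g \cdot (a,b)) = \phi(g) \cdot \phi_{B_f}(a,b)$ is a direct computation: the $G$-action only relabels the $B$-indices by left multiplication, and since $\phi$ is a homomorphism one has $\phi(gg_j) = \phi(g)\phi(g_j)$, so both sides relabel the indices identically. This yields the automorphism $\widetilde\phi(n, k) = (\phi_{B_f}(n), \phi(k))$ of $H(G,P)$, and evaluating on $((a_h, 0_B), id_G)$ recovers the stated formula.

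Finally I would confirm that $\Psi$ is an injective homomorphism. The homomorphism property $\widetilde{\phi \circ \psi} = \widetilde\phi \circ \widetilde\psi$ holds because each component map ($\phi_A$, $\phi_B$, and $\phi$ itself) is defined by relabelling indices through $\phi$, and these relabellings compose. For injectivity, note that $\widetilde\phi((0_A, 0_B), g) = ((0_A, 0_B), \phi(g))$, so if $\widetilde\phi$ is the identity then $\phi(g) = g$ for every $g \in G$, forcing $\phi = \mathrm{id}$. I expect the verification of the cocycle compatibility in the second step to be the heart of the argument, since it is the only place the order-preservation hypothesis is invoked and where the piecewise definition of $f$ actually enters.
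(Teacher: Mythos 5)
Your proposal is correct and follows essentially the same route as the paper: define $\phi_1$ on $A$ and $\phi_2$ on $B$ by relabelling generators, verify the cocycle compatibility $\phi_1(f(b_g,b_h)) = f(\phi_2(b_g),\phi_2(b_h))$ using order-preservation, and then combine Lemmas~\ref{lem:aut1} and~\ref{lem:aut2} to assemble $\widetilde\phi$, checking at the end that $\Psi$ is an injective homomorphism. No gaps.
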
 
\begin{proof}    
Given $\phi \in \Aut(G, <_P)$, define $\phi_1\colon A \rightarrow A$ and $\phi_2 \colon B \rightarrow B$ by setting $\phi_1(a_h) = a_{\phi(h)}$ and $\phi_2(b_g) = b_{\phi(g)}$, and then extending linearly (again, regarding $A$ and $B$ as $\mathbb{Z}$-modules). Note that $h \in P$ if and only if $\phi(h) \in P$, and so $\phi_1$ is indeed an automorphism of $A$ since it sends a free basis of $A$ to itself.  By similar reasoning, $\phi_2$ is an automorphism of $B$.

Observe that for all $b_g, b_h \in B$, we have 
\[
\phi_1(f(b_g, b_h)) =
\begin{cases}
    a_{\phi(g^{-1}h)} &\text{if \(g<_Ph\)}\\
    0_A &\text{otherwise}, 
\end{cases}
\]
and
\[
f(\phi_2(b_g), \phi_2(b_h)) =f(b_{\phi(g)}, b_{\phi(h)}) = 
\begin{cases}
    a_{\phi(g^{-1}h)} &\text{if \(\phi(g)<_P \phi(h)\)}\\
    0_A &\text{otherwise}. 
\end{cases}
\]
These quantities are always equal, since $\phi(g) <_P \phi(h)$ if and only if $g <_P h$.  By linearity, we conclude that $\phi_1(f(b, b')) = f(\phi_2(b), \phi_2(b'))$ for all $b, b' \in B$.  So, by Lemma \ref{lem:aut1} the expression $\phi'(a,b) = (\phi_1(a), \phi_2(b))$ defines an automorphism $\phi' \colon B_f \rightarrow B_f$.

Next, considering the action of $G$ on $B_f$, we observe that
\begin{multline*} \phi'\left(g \cdot \left(\sum_i s_i a_{h_i} , \sum_j t_j b_{g_j} \right)\right) = \left(\sum_i s_i a_{\phi(h_i)} , \sum_j t_j b_{\phi(gg_j)} \right) \\
= \phi(g) \cdot \phi'\left(\sum_i s_i a_{h_i} , \sum_j t_j b_{g_j} \right).
\end{multline*}
Thus, by Lemma \ref{lem:aut2}, $\widetilde{\phi} : H(G, P) \rightarrow H(G,P)$ defined by $\widetilde{\phi}(b,g) = (\phi'(b), \phi(g))$ for all $b \in B_f$ and $g \in G$ is an automorphism of $H$. Writing the expression for $\widetilde{\phi}$ in full, we have
\[
 \widetilde\phi \left(\left(\sum_i s_i a_{h_i} , \sum_j t_j b_{g_j} \right), g \right) = \left(\left(\sum_i s_i a_{\phi(h_i)} , \sum_j t_j b_{\phi(g_j)} \right), \phi(g) \right), 
 \]
from which it is easy to see that $\widetilde\phi((a_h, 0_B), id_G) = ((a_{\phi(h)}, 0_B), id_G)$ as claimed, and that \(\Psi\colon \Aut(G, <_P) \to \Aut(H(G, P))\) is an injective homomorphism.
\end{proof}

Next, we want to check that certain quotients of $H(G,P)$ are bi-orderable.  Our argument will use a lexicographic construction, and so we need the following lemma. The proof is standard and so we omit it.

\begin{lemma}
\label{lem:boses}
Suppose that 
\[ 1 \longrightarrow K \stackrel{i}{\longrightarrow} H \stackrel{q}{\longrightarrow} G \longrightarrow 1
\]
is a short exact sequence.  If $P_G \subset G$ and $P_K \subset i(K)$ are positive cones of bi-orderings of $G$ and $i(K)$ respectively, and if $hP_Kh^{-1} \subset P_K$ for all $h \in H$, then 
\[ P_H = P_K \cup q^{-1}(P_G)
\]
is the positive cone of a bi-ordering of $H$.
\end{lemma}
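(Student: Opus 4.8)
The plan is to verify directly that $P_H = P_K \cup q^{-1}(P_G)$ satisfies the three defining properties of a positive cone of a bi-ordering: closure under multiplication, the dichotomy $H \setminus \{id\} = P_H \sqcup P_H^{-1}$, and conjugation invariance. Throughout I identify $i(K)$ with $\ker q$, so that $P_K \subset \ker q$ and $q(h) = id_G \notin P_G$ for every $h \in P_K$; in particular the two pieces $P_K$ and $q^{-1}(P_G)$ are disjoint, one sitting inside $\ker q$ and the other inside its complement.

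For the dichotomy, I would first record that $P_H^{-1} = P_K^{-1} \cup q^{-1}(P_G)^{-1}$ and that $q^{-1}(P_G)^{-1} = q^{-1}(P_G^{-1})$, using $q(h^{-1}) = q(h)^{-1}$. Since $P_K$ is a positive cone of $i(K)$ we have $P_K \sqcup P_K^{-1} = \ker q \setminus \{id\}$, while
\[ q^{-1}(P_G) \cup q^{-1}(P_G^{-1}) = q^{-1}(P_G \sqcup P_G^{-1}) = q^{-1}(G \setminus \{id_G\}) = H \setminus \ker q.
\]
Taking the union yields $P_H \cup P_H^{-1} = H \setminus \{id\}$. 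Disjointness of $P_H$ and $P_H^{-1}$ then reduces to checking the four cross-intersections: $P_K \cap P_K^{-1} = \emptyset$ and $q^{-1}(P_G) \cap q^{-1}(P_G^{-1}) = q^{-1}(P_G \cap P_G^{-1}) = \emptyset$ hold because $P_K$ and $P_G$ are positive cones, while the two mixed intersections are empty since one factor lies in $\ker q$ and the other in $H \setminus \ker q$.

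Next I would establish closure under multiplication by a short case analysis on $x, y \in P_H$. If both lie in $P_K$ then $xy \in P_K$ as $P_K$ is a semigroup; if both lie in $q^{-1}(P_G)$ then $q(xy) = q(x)q(y) \in P_G$. In each mixed case one factor lies in $\ker q$, so $q(xy)$ equals the $q$-image of the factor in $q^{-1}(P_G)$, which lies in $P_G$; hence $xy \in q^{-1}(P_G) \subset P_H$. For conjugation invariance, take $h \in H$ and $x \in P_H$: if $x \in P_K$ then $hxh^{-1} \in P_K$ by hypothesis, and if $x \in q^{-1}(P_G)$ then $q(hxh^{-1}) = q(h)q(x)q(h)^{-1} \in P_G$ because $P_G$ is conjugation invariant in $G$, so $hxh^{-1} \in q^{-1}(P_G)$.

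The only genuinely substantive point — and the reason the extra hypothesis $hP_Kh^{-1} \subset P_K$ for all $h \in H$ appears in the statement — is the conjugation-invariance step for $x \in P_K$. The bi-ordering of $i(K)$ by itself only guarantees invariance under conjugation by elements of $i(K)$, whereas here one must conjugate by arbitrary $h \in H$, and this is exactly what the hypothesis supplies. I therefore expect no real obstacle beyond keeping the bookkeeping of $\ker q$ versus its complement straight; the rest is a routine unravelling of the disjoint-union decomposition.
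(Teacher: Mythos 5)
Your proposal is correct and is exactly the standard argument the authors have in mind; the paper itself omits the proof of this lemma with the remark that it is standard. Your case analysis (dichotomy via the decomposition of $H$ into $\ker q$ and its complement, semigroup closure, and conjugation invariance, with the hypothesis $hP_Kh^{-1}\subset P_K$ doing the only nontrivial work) is complete and fills the gap the paper leaves to the reader.
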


\begin{lemma}
\label{lem:BOsemidirectproduct}
Recall that $G$ acts on $B$ from the left according to the rule \[ g \cdot \sum_j t_jb_{g_j} = \sum_j t_jb_{gg_j}
\]    
for all $g \in G$ and $\sum_j t_jb_{g_j} \in B$.  If $G$ is a bi-orderable group, then the semidirect product $B \rtimes G$ is bi-orderable.
\end{lemma}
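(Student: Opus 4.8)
The plan is to build a bi-ordering of $B \ltimes G$ by applying the lexicographic construction of Lemma~\ref{lem:boses} to the short exact sequence
\[ 1 \longrightarrow B \longrightarrow B \ltimes G \longrightarrow G \longrightarrow 1. \]
Since $G$ is bi-orderable, it comes with a positive cone $P_G$ of a bi-ordering, which will serve as the quotient ordering. The real content is to produce a positive cone $P_B$ of a bi-ordering of $B$ that is invariant under conjugation by every element of $B \ltimes G$. As $B$ is abelian and normal, conjugation by elements of $B$ is trivial, and a short computation shows that conjugation by $(b,g)$ acts on $B$ exactly through the given $G$-action $c \mapsto g \cdot c$. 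So the task reduces to finding a $G$-invariant bi-ordering of $B$.

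To construct $P_B$, I would fix a left-ordering $<$ of $G$ (which exists because $G$ is left-orderable) and order $B$ by leading term with respect to $<$. Precisely, for nonzero $x = \sum_j t_j b_{g_j}$ let $m(x)$ be the $<$-largest index appearing in its support, and declare $x \in P_B$ exactly when the coefficient of $b_{m(x)}$ is positive. A routine check shows $P_B$ is a positive cone: every nonzero element has a leading coefficient of a definite sign, and when two elements of $P_B$ are added, their leading terms either occur at distinct indices (so the larger one survives with a positive coefficient) or at the same index (so the two positive coefficients add), whence $P_B$ is closed under addition. Being a subgroup ordering of an abelian group, it is automatically a bi-ordering.

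The key step is invariance. Because the $G$-action sends $x = \sum_j t_j b_{g_j}$ to $g \cdot x = \sum_j t_j b_{g g_j}$, it carries the support of $x$ to its left-translate by $g$, and left-invariance of $<$ gives $m(g \cdot x) = g\, m(x)$; moreover the coefficient of $b_{g\, m(x)}$ in $g \cdot x$ equals the coefficient of $b_{m(x)}$ in $x$. Hence $x \in P_B \iff g \cdot x \in P_B$, so $P_B$ is $G$-invariant, and therefore $h P_B h^{-1} = P_B$ for every $h \in B \ltimes G$. With all hypotheses of Lemma~\ref{lem:boses} verified, $P_B \cup q^{-1}(P_G)$ is the positive cone of a bi-ordering of $B \ltimes G$, completing the proof.

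I expect the only delicate point to be the verification that the leading-term prescription genuinely yields a positive cone, specifically that the leading coefficients of two positive elements cannot cancel; this is exactly what makes both the semigroup property and the invariance argument run cleanly, while the remaining steps are bookkeeping.
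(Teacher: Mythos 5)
Your proposal is correct and follows essentially the same route as the paper: order $B$ by the sign of the coefficient at the $<$-extremal element of the support, observe that left-invariance of the ordering on $G$ makes this cone invariant under the $G$-action (hence under all conjugation in $B \ltimes G$), and conclude via the lexicographic Lemma~\ref{lem:boses}. The only cosmetic differences are that the paper uses the $<_P$-minimal rather than maximal support element, and it takes $<_P$ to be the given bi-ordering of $G$ where you correctly note that any left-ordering suffices for this step.
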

\begin{proof}
Fix a positive cone $P \subset G$ of a bi-ordering $<_P$ of $G$. Define a positive cone $Q \subset B$ as follows: Given $b = \sum_{j=1}^n t_j b_{g_j}$ with $t_1, \ldots, t_n$ all nonzero, suppose $g_{i_0} = \min_{<_P}\{g_1, \ldots, g_n\}$, and declare $b \in Q$ if and only if $t_{i_0}>0$.  Since $B$ is abelian, $Q$ is the positive cone of a bi-ordering.

Next, we note that $Q$ is preserved by the action of $G$ on $B$.  For if $b = \sum_{j=1}^n t_j b_{g_j}$ as above lies in $Q$, then $t_{i_0} >0$.  Considering $g \cdot b =  \sum_{j=1}^n t_j b_{gg_j}$, note that $gg_{i_0} = \min_{<_P}\{gg_1, \ldots, gg_n\}$ because the ordering $<_P$ is preserved by left-multiplication.  Therefore $t_{i_0}>0$ implies $g \cdot b \in Q$ as well.

Considering the short exact sequence
\[ 1 \longrightarrow B \longrightarrow B \rtimes G \longrightarrow G \longrightarrow 1,
\]
since $B$ admits a bi-ordering whose positive cone $Q$ is invariant under conjugation, Lemma \ref{lem:boses} implies that $B \rtimes G$ is bi-orderable.
\end{proof}

\begin{proposition}
\label{lem:boquotient}
Suppose that $G$ is bi-orderable, that $P \subset G$ is the positive cone of a bi-ordering, and fix a subset $S \subset P$.  Let $A_S$ denote the subgroup of $H(G,P)$ generated by $\{ ((a_h, 0_B), id_G):h \in S\}$.   Then $H(G,P)/A_S$ is bi-orderable.
\end{proposition}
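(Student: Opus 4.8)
The plan is to exhibit $H(G,P)/A_S$ as a central extension of the bi-orderable group $B \ltimes G$ by a bi-orderable abelian kernel, and then to build a bi-ordering by the lexicographic recipe of Lemma~\ref{lem:boses}. First I would note that the generators $((a_h, 0_B), id_G)$ of $A_S$ all lie in the central subgroup $A$ of $H(G,P)$; hence $A_S$ is central, the quotient $H(G,P)/A_S$ is well-defined, and the image $\bar{A} = A/A_S$ of $A$ is central in it. Since $A$ is free abelian on $\{a_g\}_{g \in P}$ and $A_S$ is generated by the sub-basis $\{a_h\}_{h \in S}$, the group $\bar{A}$ is free abelian on $\{a_g + A_S\}_{g \in P \setminus S}$, and in particular admits a bi-ordering.

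The second step is to identify the outer quotient. By the third isomorphism theorem $(H(G,P)/A_S)/\bar{A} \cong H(G,P)/A$, and a direct inspection of the multiplication in $H(G,P) = B_f \ltimes G$ shows that collapsing the central $A$ reduces $B_f$ to $B$ while leaving the $G$-action on the indices of $B$ unchanged, so that $H(G,P)/A \cong B \ltimes G$. This produces a central extension
\[ 1 \longrightarrow \bar{A} \longrightarrow H(G,P)/A_S \longrightarrow B \ltimes G \longrightarrow 1
\]
with $\bar{A}$ central.

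To finish I would assemble the bi-ordering using Lemma~\ref{lem:boses}. The quotient $B \ltimes G$ is bi-orderable by Lemma~\ref{lem:BOsemidirectproduct} (using that $G$ is bi-orderable), so it carries a positive cone of a bi-ordering, and the free abelian kernel $\bar{A}$ carries one as well. The hypothesis of Lemma~\ref{lem:boses} that usually requires checking---that the positive cone of the kernel be invariant under conjugation by every element of the total group---holds automatically here, since $\bar{A}$ is central and conjugation therefore fixes it pointwise. Applying Lemma~\ref{lem:boses} to the displayed sequence yields a bi-ordering of $H(G,P)/A_S$.

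I expect the only real obstacle to be the bookkeeping in the second step: verifying with care that quotienting $H(G,P)$ by the central group $A$ returns exactly the untwisted semidirect product $B \ltimes G$, and not some variant, so that the central extension is set up over a group already known to be bi-orderable. Once this identification is secured, the centrality of $\bar{A}$ removes the only substantive hypothesis of Lemma~\ref{lem:boses}, and the conclusion is immediate.
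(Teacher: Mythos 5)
Your proposal is correct and follows essentially the same route as the paper: both form the central extension $1 \to A/A_S \to H(G,P)/A_S \to H(G,P)/A \cong B \ltimes G \to 1$, observe that the kernel is a bi-orderable (torsion-free/free) abelian central subgroup and the quotient is bi-orderable by Lemma~\ref{lem:BOsemidirectproduct}, and conclude via Lemma~\ref{lem:boses} using centrality to get conjugation-invariance for free. No substantive differences.
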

\begin{proof}
Consider the short exact sequence
\[ 1 \longrightarrow A/A_S \stackrel{\iota}{\longrightarrow} H(G,P)/A_S \stackrel{q}{\longrightarrow} H(G,P)/A \longrightarrow 1.
\]
The kernel of this short exact sequence is bi-orderable, since $A/A_S$ is a torsion-free abelian group.  The quotient $H(G,P)/A \cong B \rtimes G$ is also bi-orderable, by Lemma~\ref{lem:BOsemidirectproduct}.

Note that any choice of positive cone $Q \subset \iota(A/A_S)$ will satisfy $hQh^{-1} =Q$, since $\iota(A/A_S)$ is central in $H(G,P)/A_S$.  Thus $H(G,P)/A_S$ is bi-orderable by Lemma~\ref{lem:boses}.
\end{proof}

\begin{proposition}
There exists a group $H$ having the properties required by Theorem \ref{thm:main}.
\end{proposition}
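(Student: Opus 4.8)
The plan is to set $H = H(F,P)$ using the central extension constructed above, taking $G = F$ to be the free group of the statement, and then to read off each required property from the lemmas already proved. First I would fix $F$ to be a nonabelian finitely generated free group, which is bi-orderable by a classical fact, and choose a bi-ordering of $F$ with positive cone $P$. Since $F$ is finitely generated and left-orderable, the construction of $H(G,P)$ applies with $G = F$, and the proposition on finite generation shows that $H = H(F,P)$ is finitely generated. Identifying $A$ with the central subgroup $\{((a, 0_B), id_F) : a \in A\}$ as before gives, inside $Z(H)$, a free abelian group with free generators $\{a_g\}_{g \in P}$, and for $S \subset P$ the subgroup $A_S$ of Theorem~\ref{thm:main} is exactly the subgroup generated by $\{((a_h, 0_B), id_F) : h \in S\}$ appearing in Proposition~\ref{lem:boquotient}. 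With these identifications assumption~\eqref{assumption1} is immediate: $F$ is bi-orderable, so Proposition~\ref{lem:boquotient} gives that $H(F,P)/A_S$ is bi-orderable for every $S \subset P$.

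The substantive point is assumption~\eqref{assumption2}, and here I would use the embedding $\Psi \colon \Aut(F, <_P) \to \Aut(H(F,P))$ of Lemma~\ref{lem:aut3}. The key observation is that the positive cone of a bi-ordering is invariant under conjugation, so for each $w \in F$ the inner automorphism $c_w \colon g \mapsto w g w^{-1}$ preserves $<_P$, i.e. $c_w \in \Aut(F, <_P)$. Applying $\Psi$ produces an automorphism $\widetilde{c_w}$ of $H(F,P)$ satisfying $\widetilde{c_w}((a_h, 0_B), id_F) = ((a_{w h w^{-1}}, 0_B), id_F)$; since $w h w^{-1} \in wPw^{-1} = P$, this carries the generators of $A_S$ bijectively onto those of $A_{wSw^{-1}}$. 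Hence $\widetilde{c_w}(A_S) = A_{S'}$ whenever $S' = wSw^{-1}$, and $\widetilde{c_w}$ descends to an isomorphism $H(F,P)/A_S \cong H(F,P)/A_{S'}$, which is exactly~\eqref{assumption2}.

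In truth the construction has been arranged so that no single step presents a genuine obstacle: the content lies entirely in the preceding lemmas, and the proof is essentially an assembly of them. The one external ingredient is the bi-orderability of free groups, and the one conceptual point---used twice, here and in the reduction proving Theorem~\ref{thm:main}---is that bi-order positive cones are conjugation-invariant. This is precisely what allows every inner automorphism of $F$ to be realised as an order-preserving automorphism, and hence, through $\Psi$, as an automorphism of $H$ that intertwines the subgroups $A_S$. If anything, the only care needed is to keep the two uses of conjugation-invariance conceptually separate: the invariance of $P$ guarantees $wSw^{-1} \subseteq P$ so that $A_{wSw^{-1}}$ is defined, while the order-preservation of $c_w$ is what certifies $c_w \in \Aut(F,<_P)$ and thus licenses the application of Lemma~\ref{lem:aut3}.
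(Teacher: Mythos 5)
Your proposal is correct and follows essentially the same route as the paper: take $H = H(F,P)$, get assumption~(1) from Proposition~\ref{lem:boquotient}, and get assumption~(2) by feeding the inner automorphisms of $F$ (which preserve $<_P$ because bi-order cones are conjugation-invariant) through the embedding of Lemma~\ref{lem:aut3} so that $\widetilde{\phi_h}(A_S) = A_{hSh^{-1}}$. Your explicit remarks on finite generation and on why $c_w \in \Aut(F,<_P)$ are points the paper leaves implicit, but the argument is the same.
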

\begin{proof} 
Let $F$ denote a finitely generated free group, and $P \subset F$ the positive cone of a bi-ordering.  

Consider the group $H(F,P)$.  The centre of $H(F, P)$ contains a free abelian group with free generating set $((a_g, 0_B), id_F)$ for $g \in P$.  Given $S \subset P$, and denoting by $A_S$ the subgroup of $A$ generated by $\{((a_g, 0_B), id_F):g \in S\}$, we have that $H(F,P)/A_S$ is bi-orderable by Lemma \ref{lem:boquotient}.

Last, suppose that $S, S' \subset P$ and there exists $h \in F$ such that $S' = hSh^{-1}$.  For each $h \in F$, let $\phi_h \colon F \rightarrow F$ denote the conjugation map $\phi_h(g) = hgh^{-1}$. Then $\phi_h \colon F \rightarrow F$ preserves the ordering $<_P$, and so yields an automorphism $\widetilde{\phi_h} :H(F,P) \rightarrow H(F,P)$ by Lemma \ref{lem:aut3}.  Moreover, 
\[\widetilde{\phi_h}((a_g, 0_B), id_F) = ((a_{\phi_h(g)}, 0_B), id_F) = ((a_{hgh^{-1}}, 0_B), id_F)
\]
and thus $\widetilde{\phi_h}$ maps the generating set $\{((a_g, 0_B), id_F):g \in S\}$ of $A_S$ to the generating set $$\{((a_{hgh^{-1}}, 0_B), id_F):g \in S\} = \{((a_g, 0_B), id_F):g \in S'\}$$ of $A_{S'}$.  Thus $\widetilde{\phi_h}(A_S) = A_{S'}$, so that $\widetilde{\phi_h}$ descends to an isomorphism $H(F,P)/A_S \cong H(F,P)/A_{S'}$.
\end{proof}

\section{Future work and open questions}
\label{sec:questions}

As discussed in Section 2, it is still unknown whether weak universality implies universality. Hence, the following more specialized question is compelling: 
\begin{question}
    Is the isomorphism relation of finitely generated bi-orderable groups universal?
\end{question}

On a different note, our setup to analyze the space of finitely generated left-orderable groups can likely be adapted to analyze other interesting classes of left-orderable groups, so we propose the following general problem:

\begin{problem}
    To analyze the descriptive complexity of the isomorphism relations for other classes of finitely generated left-orderable groups.
\end{problem}

Interestingly, Jay Williams \cite{Wil15} proved that the isomorphism relation of finitely generated \(3\)-step solvable is weakly universal. Thus, the following question is particularly intriguing:

\begin{question}
\label{question : amenable}
    Is the isomorphism relation of finitely generated left-orderable amenable groups (weakly) universal?
\end{question}

Question~\ref{question : amenable} requires new ideas.
In fact, the groups constructed in Theorem~\ref{thm:main} are far from being amenable, as they contain non-abelian free groups. Moreover, Williams' methods seem at odds with orderability.

Last we recall the so-called cocycle property.
Let \(G\) be a Polish group and \(a\colon G\times X \to X\) a Borel action of \(G\) on a Borel set \(X\). We say that this action has the \emph{cocycle property} if there is a Borel map \(\rho\colon E_a \to G\), where \(E_a\), is the orbit equivalence relation defined by \(x\mathbin{E}_ay \iff \exists g(a(g , x) = y)\), such that for \(x\mathbin{E}_ay\), such that 
\begin{enumerate}
\item
\(\rho(x, y)\cdot x=y\)
\item
\(\rho\) is a \emph{cocycle}, i.e., for all \(x,y,z\in X\) in the same orbit we have \(\rho(x, y)=\rho(y, z)\rho(x, y)\).
\end{enumerate}

\begin{question}
\label{question : cocycle property}
    Does \((\mathcal{BO}, \cong)\) have the cocycle property?
\end{question}

It is worth pointing out that the only known equivalence relations which have the cocycle property are all universal. So answering Question~\ref{question : cocycle property} would be interesting either way.

\end{document}